\newtheorem{theorem}{Theorem}
\newtheorem{corollary}[theorem]{Corollary}
\newtheorem{lemma}[theorem]{Lemma}
\newtheorem{fact}[theorem]{Fact}
\newtheorem{remark}[theorem]{Remark}
\theoremstyle{definition}
\newtheorem{definition}[theorem]{Definition}
\newcommand{\E}{\mathbb{E}}
\newcommand{\prob}{\mathbb{P}}
\newcommand{\GSz}{G_{Sz}}
\newcommand{\whp}{{\rm whp}}
\newcommand{\chicr}{\chi_{\rm cr}}
\title{Tiling randomly perturbed bipartite graphs}
\author{Enrique Gomez-Leos}
\thanks{Research partially supported by National Science Foundation RTG grant DMS-1839918. \texttt{enriqueg@iastate.edu}} 
\author{Ryan R. Martin}
\thanks{Research partially supported by Simons Foundation Collaboration Grant for Mathematicians \#709641. \texttt{rymartin@iastate.edu}} 
\address{Iowa State University, Ames, Iowa, USA} 
    \keywords{tiling, perturbed graphs, regularity}
    \subjclass{05C35, 05C80}
\begin{document}

\maketitle

\begin{abstract}
    A perfect $H$-tiling in a graph $G$ is a collection of vertex-disjoint copies of a graph $H$ in $G$ that covers all vertices of $G$. Motivated by papers of Bush and Zhao and of Balogh, Treglown, and Wagner,  we determine the threshold for the existence of a perfect $K_{h,h}$-tiling of a randomly perturbed bipartite graph with linear minimum degree. 
\end{abstract}

\section{Introduction}
Embedding problems form a central part of both extremal and random graph theory. Many results in extremal graph theory concern minimum degree conditions that guarantee the existence of some spanning subgraph. A celebrated result of Corr\'{a}di and Hajnal gives the minimum degree of finding a perfect $K_3$-tiling \cite{corradi1963maximal}. Later, Hajnal and Szemer\'{e}di generalized this result to cliques of arbitrary size~\cite{hajnal1970proof}. Since then, there have been generalizations to the multipartite setting, for instance  \cite{zhao2009bipartite, magyar2002tripartite, martin2008quadripartite, keevash2015multipartite}. 

Recall that the Erd\H{o}s-R\'{e}nyi random graph $G_{n,p}$ consists of the vertex set $[n]$ where each edge is present, independently, with probability $p$. In this regime, a key question is to establish the threshold for which $G_{n,p}$ contains a spanning subgraph.  
An \emph{$H$-tiling} of a graph $G$ is a subgraph consisting of vertex disjoint copies of $H$ and a \emph{perfect $H$-tiling} of $G$ is an $H$-tiling which spans all vertices of $G$. 
In their groundbreaking paper, Johansson, Kahn, and Vu~\cite{JKV} settled the threshold for which $G_{n,p}$ admits a perfect $H$-tiling for a fixed nonempty, \textit{strictly balanced}, graph $H$. 
More recently, Gerke and McDowell \cite{GerkeMcDowell} determined the corresponding threshold when $H$ is what they call \textit{nonvertex-balanced} graph.

Bohman, Frieze, and Martin~\cite{bohman2003many} introduced the randomly perturbed graph model which connects the two questions together. In the randomly perturbed setting, Balogh, Treglown, and Wagner \cite{balogh2019tilings} determined a probability $p$ for the appearance of a perfect $H$-tiling in a graph on $n$ vertices with minimum degree at least $\alpha n$ and showed that this is best possible for $\alpha<1/|V(H)|$~\cite[Section 2.1]{balogh2019tilings}, where $H$ is strictly balanced.  


In this paper, we let $h\geq 1$ be a positive integer, set $H=K_{h,h}$, and consider tiling the \emph{randomly perturbed bipartite graph} which consists of two graphs on the same vertex set $A\sqcup B$, $|A|=|B|=n$, and $h \mid n$. One is an arbitrary graph $G_{n;\alpha}$, the other is a random graph $G_{n,n,p}$, each of which is bipartite, and each respects the same bipartition $(A,B)$. Hence each of $A$ and $B$ will always be independent sets. A bipartite graph in which each class is the same size is called \textit{balanced}.

In this setting, $G_{n;\alpha}$ is an arbitrary  bipartite graph as above with minimum degree $\delta(G) \geq \alpha n$ and $G_{n,n,p}$ is an Erd\H{o}s-R\'{e}nyi random bipartite graph as above, respecting the same bipartition as $G_{n;\alpha}$. Note that that we differ from Balogh, Treglown, and Wagner~\cite{balogh2019tilings} in that we restrict the deterministic graph, $G_{n;\alpha}$, to be bipartite but we also restrict the appearance of the random edges to appear only between the two vertex disjoint classes. 

Zhao~\cite{zhao2009bipartite} proved that, for $h\geq 2$, if a balanced bipartite graph on $2n$ vertices, $n$ divisible by $h$, has minimum degree 
\[
    \delta(G) \geq \begin{cases}
    \frac{n}{2} +h -1, & n/h\text{ even}\\
    \frac{n+3h}{2}-2, & n/h\text{ odd},
    \end{cases}
\]
and $n$ sufficiently large, then it has a perfect $K_{h,h}$-tiling with no random edges necessary. Later, the case of tiling by unbalanced complete bipartite graphs was settled by Hladk\'y and Schacht~\cite{Hladkytiling} and by Czygrinow and DeBiasio~\cite{Czygrinowtiling}. Bush and Zhao~\cite{Bush} later covered the remaining cases.

Note that for any bipartite graph $H$ on $h$ vertices, it suffices to tile a graph with copies of $K_{h,h}$ in order to obtain an $H$-tiling. This is because $K_{h,h}$ itself can be tiled with two copies of $H$. Although weaker conditions could suffice for a perfect $H$-tiling, where $H$ is any bipartite graph, we direct only our attention to the the case of when $H=K_{h,h}$. 
\begin{remark}\label{remark:divisibility}
  When $h$ does not divide $n$, then it is not possible to tile all of the vertices of $G_{n;\alpha}$ by copies of $K_{h,h}$. Since our result is asymptotic, if $n$ is not divisible by $h$, then we can delete at most $h-1$ vertices from each part and prove the theorem for the resulting graph. Since the value of $n$ differs by at most a constant, the threshold function will not change. In sum, we will always assume that $n$ is divisible by $h$.   
\end{remark}

We use $\prob(A)$ to denote the probability of event $A$ and use $\E[X]$ to denote the expectation of random variable $X$. We say that a sequence of events $A_1,A_2,\ldots,A_n,\ldots$ occurs \emph{with high probability} ($\whp$) if $\lim_{n\to \infty} \prob (A_n) = 1$.
In the perturbed graph setting, we define the threshold as follows: 

The function $t(n): \mathbb{N} \to (0,1)$ is a \emph{threshold function for property $P$} if there exist real positive constants $c, C$ such that 
\begin{align*}
    p(n) \leq ct(n) &\qquad\Longrightarrow\qquad \exists G_{n;\alpha}^{*} \mbox{ such that } G_{n;\alpha}^{*}\cup G_{n,n,p}\not\in P, \whp. \\
    p(n) \geq  C t(n)&\qquad\Longrightarrow\qquad  \forall G_{n;\alpha} \mbox{ we have } G_{n;\alpha }\cup G_{n,n,p}\in P, \whp.
    \end{align*}


The question of interest is to determine the threshold function for a perfect $K_{h,h}$-tiling in $G_{n;\alpha} \cup G_{n,n,p}$, given that $h \mid n$. Bollob\'as and Thomason~\cite{BTthresholds} established that such a threshold exists. Our main result is as follows. 

\begin{theorem}\label{thm:threshold}
    For each real number $\alpha \in \bigl(0,1/(2h)\bigr)$, and positive integer $h \geq 1$, a threshold function for the existence of a perfect $K_{h,h}$-tiling is $n^{-(2h-1)/h^2}$.
\end{theorem}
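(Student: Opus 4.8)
The plan is to establish the two implications in the definition of a threshold separately: a \emph{$0$-statement} exhibiting a bad perturbation $G_{n;\alpha}^{*}$ when $p$ is small, and a \emph{$1$-statement} showing every perturbation works when $p$ is large.

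For the $0$-statement I would use a space-barrier construction. Put $a:=\lceil\alpha n\rceil$, partition $A=A_1\sqcup A_2$ and $B=B_1\sqcup B_2$ with $|A_1|=|B_1|=a$, and let $G_{n;\alpha}^{*}$ be the bipartite graph on $A\sqcup B$ with edge set $\{xy:x\in A_1\text{ or }y\in B_1\}$, so that $\delta(G_{n;\alpha}^{*})\ge\alpha n$. The point is a deficiency count: in any perfect $K_{h,h}$-tiling $\{C_1,\dots,C_{n/h}\}$ of $G_{n;\alpha}^{*}\cup G_{n,n,p}$, with $X_i,Y_i$ the $A$- and $B$-parts of $C_i$, we have $\sum_i|X_i\cap A_1|=|A_1|=a$, so at most $a$ copies meet $A_1$ and at most $a$ meet $B_1$; hence at least $n/h-2a$ copies have $X_i\subseteq A_2$ and $Y_i\subseteq B_2$, and every such copy is a copy of $K_{h,h}$ lying entirely inside $G_{n,n,p}[A_2,B_2]$, since $G_{n;\alpha}^{*}$ has no edges there. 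As $\tfrac1h-2\alpha>0$ by hypothesis, for large $n$ this forces at least $m:=\lceil\delta_0 n\rceil$ pairwise-disjoint copies of $K_{h,h}$ in $G_{n,n,p}[A_2,B_2]$, where $\delta_0:=\tfrac12(\tfrac1h-2\alpha)$. A union bound over families of $m$ pairwise-disjoint copies bounds the probability of this by $\tfrac1{m!}\bigl(\binom{n}{h}^{2}p^{h^{2}}\bigr)^{m}$; since $\binom{n}{h}^{2}p^{h^{2}}\le\bigl(c^{h^{2}}/(h!)^{2}\bigr)n$ when $p=cn^{-(2h-1)/h^{2}}$, this is at most $\bigl(ec^{h^{2}}/(\delta_0(h!)^{2})\bigr)^{\delta_0 n}=2^{-\Omega(n)}$ once $c=c(\alpha,h)$ is small enough, so $G_{n;\alpha}^{*}\cup G_{n,n,p}$ has no perfect $K_{h,h}$-tiling, \whp.

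For the $1$-statement I would run the absorption method, in the spirit of Balogh, Treglown and Wagner, adapted to balanced bipartite graphs; note that $K_{h,h}$ is strictly $1$-balanced, so the exponent $(2h-1)/h^{2}=1/m_{1}(K_{h,h})$ is exactly the order of what the random graph must supply. The argument has three stages. First, reserve a small balanced \emph{absorbing set} $Z\subseteq A\sqcup B$, a disjoint union of constant-size gadgets (each a pair of $K_{h,h}$-copies that can be re-tiled once a leftover ``unit'' of $h$ vertices of $A$ and $h$ of $B$ is adjoined), chosen so that for every balanced $W'$ with $|W'\cap A|=|W'\cap B|$ divisible by $h$ and $|W'|$ at most a small constant multiple of $|Z|$, the graph $(G_{n;\alpha}^{*}\cup G_{n,n,p})[Z\cup W']$ has a perfect $K_{h,h}$-tiling; one shows each unit has $\ge\beta n^{2h}$ available gadgets, so a random $Z$ of size $O(\gamma^{2}n)$ works \whp. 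Second, build an almost-perfect $K_{h,h}$-tiling of $(A\sqcup B)\setminus Z$: apply the Szemer\'edi regularity lemma to $G_{n;\alpha}^{*}$, yielding a bounded reduced bipartite graph of minimum degree $\ge(\alpha-o(1))m$, tile an $\alpha$-proportion of each side inside regular pairs along a matching of the reduced graph, and tile all but $\gamma n$ vertices of each part of the remainder using the edges of $G_{n,n,p}$ --- which, by the methods of Johansson, Kahn and Vu, whp contains a $K_{h,h}$-tiling covering all but $\gamma n$ vertices of each part of any pair of linear-sized sets once $p\ge C(\gamma,h)n^{-(2h-1)/h^{2}}$. After discarding a bounded number of copies to restore balance and divisibility by $h$, the uncovered set $W$ is small enough to be absorbed. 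Third, absorb $W$ using $Z$, completing a perfect $K_{h,h}$-tiling.

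The step I expect to be the main obstacle is the construction of the absorbing set. When $\alpha<1/(2h)$ the deterministic graph can be as degenerate as the space barrier above, so no $h$-tuple of vertices need have a large common neighbourhood in $G_{n;\alpha}^{*}$; and at $p=\Theta(n^{-(2h-1)/h^{2}})$ the random graph is far too sparse for $h$-sets to have large common neighbourhoods either, so the absorbing gadgets cannot be built naively. They must be engineered to use the minimum-degree condition of $G_{n;\alpha}^{*}$ to supply enough ``reach'' while calling on the random edges only to bridge locally, and one must check that \emph{every} possible small leftover, not just a typical one, is absorbable. This verification is exactly where the hypothesis $\alpha<1/(2h)$ and the precise exponent $(2h-1)/h^{2}$ enter on the positive side, mirroring their role in the $0$-statement. (The case $h=1$, where a perfect $K_{1,1}$-tiling is a perfect matching, is a degenerate instance of the same scheme and can alternatively be handled by a direct Hall-type argument.)
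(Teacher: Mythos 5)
Your $0$-statement is correct and is essentially the paper's: your construction (all edges incident to $A_1\cup B_1$, none between $A_2$ and $B_2$) is exactly the extremal graph of Theorem~\ref{thm: example}, and your deficiency count is the same; the only difference is that you verify the absence of a linear-sized disjoint family of $K_{h,h}$'s in $G_{n,n,p}[A_2,B_2]$ by a direct first-moment bound, where the paper invokes the $0$-statement of the partial-tiling threshold (Lemma~\ref{lemma: partial covering}). That computation is fine.

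The $1$-statement, however, has a genuine gap: the absorbing set is never constructed, and you yourself flag its construction as ``the main obstacle'' without resolving it. The claim that each unit of $h+h$ leftover vertices admits $\geq\beta n^{2h}$ absorbing gadgets is precisely what is in doubt at this density. With $\alpha<1/(2h)$ the deterministic graph can be the space barrier itself, so an $h$-set of vertices need have no common neighbourhood at all in $G_{n;\alpha}$; and at $p=\Theta\bigl(n^{-(2h-1)/h^2}\bigr)$ the random bipartite graph is below the threshold at which every vertex lies in a copy of $K_{h,h}$ (this is exactly why the polylog factor appears in Theorem~\ref{thm: covering}), so the random edges alone cannot supply, for an \emph{arbitrary} prescribed unit, the local bridging your gadgets require; the interplay of the two graphs needed to make every possible leftover absorbable is the whole difficulty, and it is asserted rather than proved. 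There is also a bookkeeping inversion: you take $|Z|=O(\gamma^2 n)$ but leave a leftover of size up to $\gamma n$ per part, which is not ``a small constant multiple of $|Z|$.'' The paper avoids absorption entirely: after the regularity lemma it tiles the reduced graph with stars $K_{1,t}$ via Bush--Zhao (Corollary~\ref{cor:bipartite stars}), super-regularizes the star pairs, places each exceptional vertex into a copy of $K_{h,h}$ inside a super-regular pair using its linear degree (Corollary~\ref{cor: (v,A,B)}), fixes divisibility and cluster sizes with a bounded number of further copies, uses the random edges only through the partial-tiling Lemma~\ref{lemma: partial covering} between matched leaves, and completes the tiling deterministically inside the remaining super-regular pairs with the Blow-up Lemma (Theorem~\ref{lemma: blow-up}); no statement about absorbing arbitrary small leftovers at this edge density is ever needed. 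As written, your upper-bound argument does not constitute a proof.
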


The statement of Theorem~\ref{thm:threshold} is proved in two parts. These are Theorem~\ref{thm: example} and Theorem~\ref{thm: main} below.
\begin{theorem}\label{thm: example}
    For each real number $\alpha \in \bigl(0,1/(2h)\bigr)$ and positive integer $h\geq 1$, there exists a constant $c>0$ and a sequence of graphs $\bigl\{G_{n;\alpha}^{*}\bigr\}_{n\in h\mathbb{N}}$ for which $G_{n;\alpha}^{*}$ is a balanced bipartite graph on $2n$ vertices, with minimum degree $\delta\bigl(G_{n;\alpha}^{*}\bigr) \geq \alpha n$. The sequence $\bigl\{G_{n;\alpha}^{*}\bigr\}$ satisfies the condition that if $p\leq cn^{-(2h-1)/h^2}$ then, \whp, $G_{n;\alpha}^{*}\cup G_{n,n,p}$ does not contain a perfect $K_{h,h}$-tiling.
\end{theorem}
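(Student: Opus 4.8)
The plan is a space-barrier construction together with a first-moment bound taken over \emph{packings} of $K_{h,h}$ rather than over single copies. Write $\beta := 1/h - 2\alpha$, which is positive precisely because $\alpha < 1/(2h)$; this is the only place where the gap between the hypothesis $\alpha < 1/(2h)$ and the weaker $\alpha < 1/2$ will be used. For the construction I would fix sets $A_1 \subseteq A$ and $B_1 \subseteq B$ with $|A_1| = |B_1| = \lceil \alpha n\rceil$, set $A_0 := A\setminus A_1$ and $B_0 := B\setminus B_1$, and take $G_{n;\alpha}^{*}$ to be the bipartite graph on $A\sqcup B$ with edge set $(A_1\times B)\cup(A\times B_1)$ — so that $A_0\times B_0$ is the unique non-edge region. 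Every vertex of $A_0$ then has exactly $\lceil\alpha n\rceil$ neighbours and every vertex of $A_1$ has $n$ neighbours, and symmetrically on the $B$ side, so $\delta(G_{n;\alpha}^{*}) = \lceil\alpha n\rceil \ge \alpha n$ and $G_{n;\alpha}^{*}$ is a balanced bipartite graph on $2n$ vertices, as required.

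Next I would reduce the problem to a statement about $G_{n,n,p}$ alone. Suppose $G_{n;\alpha}^{*}\cup G_{n,n,p}$ has a perfect $K_{h,h}$-tiling $\mathcal T$. Since $A$ and $B$ are independent, every member of $\mathcal T$ has exactly $h$ vertices in each of $A$ and $B$, so $|\mathcal T| = n/h$; and since the members of $\mathcal T$ are vertex-disjoint, at most $|A_1|$ of them meet $A_1$ and at most $|B_1|$ of them meet $B_1$. Hence, for $n$ large, at least $n/h - 2\lceil\alpha n\rceil \ge \beta n/2$ members of $\mathcal T$ lie entirely inside $A_0\cup B_0$, and each such member is a copy of $K_{h,h}$ whose $h^2$ edges all lie between $A_0$ and $B_0$ — hence, as $G_{n;\alpha}^{*}$ has no edges there, all of these edges lie in $G_{n,n,p}$. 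So it suffices to prove that for a suitably small constant $c = c(\alpha,h) > 0$, if $p \le c n^{-(2h-1)/h^2}$ then, \whp, $G_{n,n,p}$ contains no family of $t_0 := \lceil\beta n/2\rceil$ pairwise vertex-disjoint copies of $K_{h,h}$.

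For that final step I would run a first moment on $N_t$, the number of unordered $t$-element families of pairwise vertex-disjoint copies of $K_{h,h}$ in $G_{n,n,p}$. An ordered such family is specified in at most $\binom{n}{h}^{2t} \le \bigl(n^{2h}/(h!)^2\bigr)^t$ ways, the $th^2$ edges involved are pairwise distinct and so present simultaneously with probability $p^{th^2}$, and dividing by $t!$ gives
\[
  \E N_t \;\le\; \frac{1}{t!}\left(\frac{n^{2h}p^{h^2}}{(h!)^2}\right)^{t}
  \;\le\; \left(\frac{e\,n^{2h}p^{h^2}}{t\,(h!)^2}\right)^{t}.
\]
The exponent $(2h-1)/h^2$ is exactly calibrated so that $n^{2h}p^{h^2} \le c^{h^2}n$ when $p \le c n^{-(2h-1)/h^2}$; taking $t = t_0$, so that $n/t_0 \le 2/\beta$, yields $\E N_{t_0} \le \bigl(2ec^{h^2}/(\beta(h!)^2)\bigr)^{t_0}$, which is at most $2^{-t_0}\to 0$ once $c$ is chosen small enough in terms of $\alpha$ and $h$. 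Markov's inequality then gives that \whp\ $N_{t_0} = 0$, and combined with the reduction of the previous paragraph this finishes the proof.

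The minimum-degree check and the counting estimate are routine; the two points that need care are (i) that the ``wasted'' deterministic edges genuinely force a \emph{linear} number of copies of $K_{h,h}$ using only random edges — this is exactly where $\alpha < 1/(2h)$ enters — and (ii) that the first moment must be taken over packings rather than single copies: at this value of $p$ the expected number of copies of $K_{h,h}$ in $G_{n,n,p}$ is already $\Theta(n)$, so a plain Markov bound on that count yields only a constant, whereas counting $t_0$-packings inserts $t_0!$ in the denominator and drives the bound to $o(1)$. Point (ii) is the main idea of the argument.
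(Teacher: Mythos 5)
Your proposal is correct. The construction and the reduction are the same as the paper's: the paper takes $A=A_1\sqcup A_2$, $B=B_1\sqcup B_2$ with $|A_1|=|B_1|=\alpha n$, puts all edges in $(A_1,B_1)$, $(A_1,B_2)$, $(A_2,B_1)$ and none in $(A_2,B_2)$, and observes that vertex-disjointness forces all but at most $2\alpha n(2h-1)$ vertices of $A_2\cup B_2$ (a set of linear size, since $\alpha<1/(2h)$) to be covered by copies of $K_{h,h}$ lying entirely in the empty pair, i.e.\ using only random edges --- exactly your reduction, phrased in terms of covered vertices rather than a count of $\lceil \beta n/2\rceil$ disjoint copies. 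Where you diverge is the last step: the paper finishes by invoking the $0$-statement of Lemma~\ref{lemma: partial covering} (the partial bipartite tiling threshold, attributed to Ruci\'nski) applied to $G_{n,n,p}[A_2\cup B_2]\cong G_{\beta n,\beta n,p}$, whereas you prove the needed fact from scratch with a first moment over $t_0$-packings, using the $1/t_0!$ factor to beat the $\Theta(n)$ expected number of single copies. Your computation checks out (the calibration $n^{2h}p^{h^2}\le c^{h^2}n$, the bound $t!\ge(t/e)^t$, and the choice of $c$ depending only on $\alpha,h$ are all fine), and your observation that a plain Markov bound on single copies cannot work at this value of $p$ is exactly the point that the cited lemma hides; so your version buys a self-contained proof of the $0$-statement for this section, at the cost of reproving (a special case of) a known result that the paper simply quotes.
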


\begin{theorem}\label{thm: main}
    For each real number $\alpha >0$ and positive integer $h\geq 1$, there exists a constant $C>0$ such that any sequence of graphs $\bigl\{G_{n;\alpha}\bigr\}_{n\in h\mathbb{N}}$ for which $G_{n;\alpha}$ is a balanced bipartite graph on $2n$ vertices, with minimum degree $\delta\bigl(G_{n;\alpha}\bigr) \geq \alpha n$, the sequence $\bigl\{G_{n;\alpha}\bigr\}$ satisfies the condition that if $p\geq Cn^{-(2h-1)/h^2}$ then, \whp, $G_{n;\alpha}\cup G_{n,n,p}$ contains a perfect $K_{h,h}$-tiling.
    
\end{theorem}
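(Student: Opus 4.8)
The plan is to prove Theorem~\ref{thm: main} by the absorbing method, organised so that the minimum-degree hypothesis and the random edges play cleanly separated roles: the deterministic graph $G_{n;\alpha}$ supplies an absorbing structure, and the random edges \emph{alone} supply an almost-perfect tiling. Since a stronger minimum-degree hypothesis only helps, we may assume $\alpha$ is small, say $\alpha\in\bigl(0,1/(2h)\bigr)$, and since the $h=1$ case asks for a perfect matching in $G_{n;\alpha}\cup G_{n,n,p}$ with $p\gg 1/n$ --- which follows from a short Hall-condition union bound --- I will assume $h\ge 2$. Fix constants $1/n\ll\xi\ll\eta\ll\alpha,1/h$, let $C=C(\alpha,h)$ be large, put $p=Cn^{-(2h-1)/h^2}$, and write $G=G_{n;\alpha}\cup G_{n,n,p}$.

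\emph{Step 1: absorbing structure (uses only $\delta(G_{n;\alpha})\ge\alpha n$).} First I would record that every vertex of $G_{n;\alpha}$ lies in $\Omega(n^{2h-1})$ copies of $K_{h,h}$: for $v\in A$ we have $|N(v)|\ge\alpha n$, so the bipartite graph between $A\setminus\{v\}$ and $N(v)$ has $\Omega(n^2)$ edges and hence, by K\H{o}v\'ari--S\'os--Tur\'an, $\Omega(n^{2h-1})$ copies of $K_{h-1,h}$, each completing with $v$ to a $K_{h,h}$. Building on this local abundance I would prove an absorbing lemma: there is a $K_{h,h}$-tiling $\mathcal{A}$ of $G_{n;\alpha}$ with $|V(\mathcal{A})|\le\eta n$ such that for every balanced $W\subseteq V(G)\setminus V(\mathcal{A})$ with $|W|\le 2\xi n$ and $h\mid|W\cap A|$, the graph $G[V(\mathcal{A})\cup W]$ has a perfect $K_{h,h}$-tiling. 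As usual this reduces to showing that every balanced $2h$-set $S$ admits $\Omega(n^{\ell})$ \emph{$S$-absorbers} --- constant-size sets $T$ for which both $G_{n;\alpha}[T]$ and $G_{n;\alpha}[T\cup S]$ have perfect $K_{h,h}$-tilings --- followed by a probabilistic selection of a robust sub-family (following R\"odl--Ruci\'nski--Szemer\'edi and Montgomery) that can absorb any admissible $W$ one $2h$-set at a time.

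\emph{Step 2: almost-perfect tiling (uses only $G_{n,n,p}$).} Let $G'=G-V(\mathcal{A})$, a balanced bipartite graph on $2n'$ vertices with $n'\equiv 0\pmod h$; since $\mathcal{A}$ was built from $G_{n;\alpha}$ only, $V(G')$ is independent of $G_{n,n,p}$. I would show that, \whp, every balanced $W\subseteq V(G')$ with $|W|\ge\xi n$ contains a copy of $K_{h,h}$ lying entirely in $G_{n,n,p}$. Indeed, for a fixed such $W$ with parts of size $m\ge\xi n/2$, the number $X$ of copies of $K_{h,h}$ inside $W$ in $G_{n,n,p}$ satisfies $\E[X]=\binom{m}{h}^2p^{h^2}\asymp C^{h^2}n$, and since $K_{h,h}$ is strictly balanced for $h\ge 2$, Janson's inequality gives $\prob[X=0]\le e^{-(1-o(1))\E[X]}\le e^{-\Omega(C^{h^2}n)}$, which beats the union bound $\binom{n}{m}^2\le 4^n$ over all choices of $W$ once $C$ is large. (This is exactly where the exponent $(2h-1)/h^2$ is forced: $\E[X]=\Omega(n)$ requires $p=\Omega(n^{-(2h-1)/h^2})$.) Consequently any maximum $K_{h,h}$-tiling $\mathcal{M}$ of $G'$ leaves fewer than $\xi n$ vertices uncovered; writing $W$ for this leftover, $W$ is balanced since $\mathcal{A}$ and $\mathcal{M}$ each cover equally many vertices of $A$ and $B$, and $h\mid|W\cap A|$ since $h\mid n$ and each copy uses $h$ vertices of $A$. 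Finally, applying the absorbing property of $\mathcal{A}$ to $W$ (legal as $|W|<\xi n<2\xi n$) yields a perfect $K_{h,h}$-tiling of $G[V(\mathcal{A})\cup W]$, which together with $\mathcal{M}$ is a perfect $K_{h,h}$-tiling of $G$.

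I expect Step~1 to be the main obstacle. The minimum-degree condition alone must produce absorbers for \emph{every} balanced $2h$-set $S$, including highly degenerate ones --- for instance an independent set $S$ whose $h$ vertices in $A$ have pairwise-disjoint neighbourhoods, so that no two of them can lie in a common $K_{h,h}$. For such $S$ one cannot route through common neighbourhoods; instead each vertex of $S$ must be placed in its own copy, and these copies must then be chained into a single gadget that is $K_{h,h}$-tileable both with and without $S$. Verifying that $\Omega(n^{\ell})$ such gadgets exist --- and that the chaining across the $2h$ vertices of $S$ can be carried out while preserving this count --- is the technical heart of the argument; the Janson estimate of Step~2, by contrast, is routine once the strict balancedness of $K_{h,h}$ for $h\ge2$ is verified.
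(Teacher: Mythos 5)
The decisive problem is Step 1. The claim on which your whole plan rests --- that $\delta(G_{n;\alpha})\ge\alpha n$ alone yields, for every balanced $2h$-set $S$, many (or even one) $S$-absorbers inside the deterministic graph --- is false, and not merely for the degenerate sets $S$ you flagged. Take $A=A_1\sqcup A_2$, $B=B_1\sqcup B_2$ with $|A_1|=|B_1|=\alpha n$, $|A_2|=|B_2|=(1-\alpha)n$, all edges between $A_1$ and $B_2$ and between $A_2$ and $B_1$, and no other edges; this graph has minimum degree $\alpha n$. Every copy of $K_{h,h}$ lies entirely inside $(A_1,B_2)$ or inside $(A_2,B_1)$, so any vertex set $T$ for which $G_{n;\alpha}[T]$ has a perfect $K_{h,h}$-tiling must satisfy $|T\cap A_1|=|T\cap B_2|$ and $|T\cap A_2|=|T\cap B_1|$. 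Now let $S$ consist of $h$ vertices of $A_2$ and $h$ vertices of $B_2$: a perfect tiling of $G_{n;\alpha}[T\cup S]$ would force $|T\cap A_2|+h=|T\cap B_1|$ and $|T\cap A_1|=|T\cap B_2|+h$, which is incompatible with the previous constraints. Hence $S$ has \emph{no} absorber in $G_{n;\alpha}$, of any size and no matter how gadgets are chained; the obstruction is a balance/parity one and is present already for $h=1$. So the clean separation ``deterministic graph supplies the absorbing structure, random edges alone supply the almost-perfect tiling'' cannot be realized.

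If you instead allow absorbers to use random edges, the difficulty becomes quantitative: at $p=\Theta\bigl(n^{-(2h-1)/h^2}\bigr)$ a fixed vertex has only $\Theta(np)=\Theta\bigl(n^{1-(2h-1)/h^2}\bigr)$ random neighbours (only $\Theta(1)$ when $h=1$), and the absorbers for a fixed $S$ that are forced to contain a random edge form a vanishing proportion of all candidate sets of the relevant constant size; the standard random selection of an absorbing family (each candidate kept with probability $\asymp n^{-(|T|-1)}$) then has expectation $o(1)$ absorbers per $S$, so the usual absorbing lemma does not go through and a genuinely different mechanism would be needed. Your Step 2 is fine (the Janson-plus-union-bound argument, using that $K_{h,h}$ is strictly balanced, is essentially the paper's partial-tiling lemma in a robust form), but the paper completes the proof along a different route that avoids vertex-by-vertex absorption altogether: it applies a bipartite degree form of the Regularity Lemma to $G_{n;\alpha}$, tiles the reduced graph by stars $K_{1,t}$ via the Bush--Zhao theorem, makes the star pairs super-regular, absorbs the exceptional vertices and fixes divisibility and cluster sizes inside these super-regular stars (using the random graph only for linear-sized partial $K_{h,h}$-tilings between matched leaf clusters), and finishes with the Blow-up Lemma. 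You would need either to adopt such a regularity/blow-up framework or to design an absorption scheme whose absorbers use random edges in a way compatible with these sparse counts.
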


As a consequence of 
Remark~\ref{remark:divisibility}, the threshold function for the existence of a $K_{h,h}$-tiling of size at least $\lfloor n/h\rfloor$ in a randomly perturbed balanced bipartite graph, without the divisibility restriction is $n^{-(2h-1)/h^2}$.

In the event that we begin with an empty graph (i.e. $\alpha =0$), we note that a multiplicative polylog factor is required (Theorem~\ref{thm: covering}). The phenomenon of removing the polylog factor in the randomly perturbed setting when $\alpha$ is small is found throughout the literature. For example, it occurs in the tiling problem for general (i.e., not bipartite) randomly perturbed graphs \cite{balogh2019tilings, bottcher2023triangles}, as well as the case of finding Hamiltonian cycles \cite{bohman2003many}, and spanning trees \cite{krivelevich2017bounded}, and even in the hypergraph setting \cite{krivelevich2016cycles}.

Following a recent trend in the study of randomly perturbed graph theory such as in Han, Morris, and Treglown~\cite{HMT} and in B\"{o}ttcher, Parcyzk, Sgueglia, and Skokan~\cite{bottcher2023triangles}, we also consider the case in which $\alpha$ is bounded below by a nonzero constant. We show that, when $\alpha \geq 1/2- 1/n$, the threshold is no lower than $n^{-(h+1)/h}$.

\begin{theorem}\label{theorem: alpha large}
    There exists a sequence of graphs $\bigl\{G_{n;\alpha}^{*}\bigr\}_{n\in h\mathbb{N}}$ for which $G_{n;\alpha}^{*}$ is a balanced bipartite graph on $2n$ vertices, with minimum degree $\delta\bigl(G_{n;\alpha}^{*}\bigr) \geq \bigl(\frac{1}{n}\bigl\lceil\frac{1}{2}n\bigr\rceil-\frac{1}{n}\bigr)n$ and that satisfies the condition that if $p\leq cn^{-(h+1)/h}$ then, \whp, $G_{n;\alpha}^{*}\cup G_{n,n,p}$ does not contain a perfect $K_{h,h}$-tiling.
\end{theorem}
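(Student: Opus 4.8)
The plan is to take $G_{n;\alpha}^{*}$ to be essentially a disjoint union of two lopsided complete bipartite graphs---this creates a ``space barrier'' obstructing a perfect $K_{h,h}$-tiling---and then to show that when $p\le cn^{-(h+1)/h}$ the random graph $G_{n,n,p}$ is far too sparse to repair it. First I would set up the extremal graph: fix a bipartition $A=A_1\sqcup A_2$, $B=B_1\sqcup B_2$ with $|A_1|=|B_2|$ and $|A_2|=|B_1|$, and let $G_{n;\alpha}^{*}$ be the disjoint union of the complete bipartite graph on $(A_1,B_1)$ and the complete bipartite graph on $(A_2,B_2)$. Taking $|A_1|=|B_2|=\lceil n/2\rceil$ and $|A_2|=|B_1|=\lfloor n/2\rfloor$ (and, in the exceptional case where this leaves $|A_1|=|B_1|$, instead taking $|A_1|=\lfloor n/2\rfloor+1$ and $|B_1|=\lceil n/2\rceil-1$) makes every degree equal to $|A_1|$ or $|A_2|$, so $\delta(G_{n;\alpha}^{*})=\min\{|A_1|,|A_2|\}\ge\lceil n/2\rceil-1$ while $|A_1|\neq|B_1|$. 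Since no vertex of $A_1$ has a neighbour in $B_2$ (and symmetrically), every copy of $K_{h,h}$ in $G_{n;\alpha}^{*}$ lies inside one of the two pieces, so a perfect tiling would force $|A_1|=|B_1|$; hence $G_{n;\alpha}^{*}$ itself has no perfect $K_{h,h}$-tiling. (To make both pieces individually untileable for \emph{every} $n\in h\N$ one may have to shift the part sizes by a bounded amount, or adjoin a ``connector'' set $Z\subseteq B$ of $h-1$ vertices joined completely to $A$; I suppress this bookkeeping.)

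Next I would record the sparsity of $G_{n,n,p}$. For $p\le cn^{-(h+1)/h}$, the expected number of copies of $K_{1,h}$ in $G_{n,n,p}$ is $\Theta(n^{h+1}p^{h})=O(c^{h})$; the expected number of copies of $C_{2k}$ is $\Theta(n^{2k}p^{2k})=O(c^{2k}n^{-2k/h})=o(1)$ for each $k\ge2$; and the expected number of copies of $K_{2,h}$ is $O(c^{2h}n^{-h})=o(1)$. Hence, for $c$ small enough, \whp\ $G_{n,n,p}$ is a forest containing no copy of $K_{1,h}$ and no copy of $C_4$---in particular its maximum degree is at most $h-1$. This is exactly where the exponent $(h+1)/h$ enters: it is the $K_{1,h}$-threshold for $G_{n,n,p}$, and for $p=o(n^{-(h+1)/h})$ one already gets \whp\ no copy of $K_{1,h}$ at all.

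Then I would bound the random edges used by a copy of $K_{h,h}$ that straddles the two pieces. Let $Q\cong K_{h,h}$ sit inside $G_{n;\alpha}^{*}\cup G_{n,n,p}$, and write $a_i=|V(Q)\cap A_i|$ and $b_i=|V(Q)\cap B_i|$, so $a_1+a_2=b_1+b_2=h$. The edges of $Q$ joining $A_1$ to $B_2$, or $A_2$ to $B_1$, are not edges of $G_{n;\alpha}^{*}$, so the random edges of $Q$ form a disjoint union $K_{a_1,b_2}\sqcup K_{a_2,b_1}$ (totalling $a_1b_2+a_2b_1$ edges). A short case analysis on $(a_1,a_2,b_1,b_2)$ shows that whenever $a_1\neq b_1$ at least one of $K_{a_1,b_2}$, $K_{a_2,b_1}$ has both sides of size at least $2$ (and so contains a $C_4$) or is a copy of $K_{1,h}$; combined with the previous paragraph, this gives that \whp\ $G_{n;\alpha}^{*}\cup G_{n,n,p}$ has no copy of $K_{h,h}$ with $a_1\neq b_1$. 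But in any perfect $K_{h,h}$-tiling, summing $a_1-b_1$ over the $n/h$ copies yields $|A_1|-|B_1|\neq0$, so some copy must have $a_1\neq b_1$---a contradiction. Therefore \whp\ $G_{n;\alpha}^{*}\cup G_{n,n,p}$ has no perfect $K_{h,h}$-tiling.

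The hard part will be the last case analysis: verifying that \emph{every} straddling type $(a_1,a_2,b_1,b_2)$ with $a_1\neq b_1$ genuinely forces one of the forbidden random subgraphs---the ``balanced'' straddling types of cost $2h-2$, such as $(1,h-1,1,h-1)$ and $(h-1,1,h-1,1)$, need to be handled with some care---together with the divisibility bookkeeping guaranteeing, for every $n\in h\N$, that the two pieces of $G_{n;\alpha}^{*}$ admit no perfect $K_{h,h}$-tiling (this is where one must track $\lceil n/2\rceil\bmod h$ against the imbalance, and where the auxiliary set $Z$ comes in). Once those are settled, only the routine first-moment estimates quoted above remain.
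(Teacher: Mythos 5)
Your proposal is correct and is essentially the paper's own proof: after relabeling $B_1\leftrightarrow B_2$, your ``two complete pairs'' graph is exactly the paper's extremal example, and both arguments reduce a perfect tiling to the existence of a single tile whose random edges contain a $K_{2,2}$ (your $C_4$) or a $K_{1,h}$, which a first-moment bound then excludes; moreover the case analysis you flag as the hard part does close, since $a_1>b_1$ forces $b_2\ge 1$, and then either $a_1,b_2\ge 2$ (giving $C_4$), or $a_1=1$ (so $b_1=0$, $b_2=h$), or $b_2=1$ (so $a_2=0$, $a_1=h$), the last two giving $K_{1,h}$, with the symmetric argument when $a_1<b_1$. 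The only caveat---at $p=c\,n^{-(h+1)/h}$ with $c$ a constant the expected number of copies of $K_{1,h}$ is $\Theta(c^h)$, so Markov gives failure with probability $1-O(c^h)$ rather than literally whp---is shared by the paper's proof, which likewise argues under $p=o\bigl(n^{-(h+1)/h}\bigr)$, so no further repair is expected of you beyond what the paper itself does.
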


Zhao's~\cite{zhao2009bipartite} result says that $\alpha\geq \frac{1}{2}+\frac{3h-4}{2n}$ for $h\geq 2$ gives a perfect tiling with $p=0$ (by Hall's condition, $\alpha\geq 1/2$ gives the same for $h=1$).
The behavior of the threshold function close to $1/2$ is worth further investigation.
The case of $h=1$ is known, however, and the threshold function is $f(n)\cdot n^{-2}$, where $\alpha=1/2-f(n)$ and the relatively straightforward proof is in the first author's dissertation~\cite{dissertation}.

\subsubsection*{Organization}
In Section~\ref{sec:prelim} we discuss necessary preliminaries. In Section~\ref{sec: random graphs} we provide necessary background on random multipartite graphs in the spirit of results featured in Janson, {\L}uczak, and Ruci\'nski~\cite{JLR}. Theorem~\ref{thm: example} is proven in Section~\ref{sec: example}. Theorem~\ref{thm: main} is proven in Section~\ref{sec: proof main} for $h\geq 2$. The proof of Theorem~\ref{theorem: alpha large} appears in Section~\ref{section: alpha large}. In Section~\ref{sec:conclusion}, we provide concluding remarks. In Section~\ref{sec:appendix}, we prove Theorem~\ref{thm: main} for the special case of $h=1$.

\section{Preliminaries}
\label{sec:prelim}
\subsection{Notation}
For a graph $H$, we will use the notation $v_H = |V(H)|$ and $e_H = |E(H)|$ and $\chi(H)$ for the chromatic number of $H$. Let $\delta(H)$ denote the minimum degree of $H$, that is, the minimum size of the neighborhood of any vertex in $H$. Given a graph $G$, we use $N(x)$ to denote the neighborhood of a vertex $x\in V(G)$. We use $\deg(x)$ to denote $|N(x)|$ and we use $\deg(x,A)$ to denote $|N(x) \cap A|$ where $A \subseteq V(G)$.

The statement $0 < \alpha \ll \beta$ means that there exists an increasing function $f$ such that given $\beta$, we can choose $\alpha$ such that $\alpha \leq f(\beta)$. Colloquially, we just say ``$\alpha$ is small enough, compared to $\beta$''.

\subsection{Probabilistic methods} A basic and fundamental probabilistic inequality which we use later is Markov's inequality.

\begin{lemma}[Markov's inequality]\label{lemma: markov}
    If $X$ is a nonnegative random variable and $t>0$ is a real number, then $\prob\bigl(X \geq t\bigr) \leq \frac{\E [X]}{t}$. In particular, if $X$ is also integer-valued, then
    \begin{align*}
        \prob\bigl(X\geq 1\bigr) \leq \E[X] .
    \end{align*}
\end{lemma}


The version of the Chernoff bound that we use can be found in~\cite[Section 2.1]{JLR}.

\begin{lemma}[Chernoff bound]\label{lemma: chernoff}
    Let $X$ be either a binomial or hypergeometric random variable. Let $t\geq 0$. Then,
    \begin{align} 
        \prob\bigl(X \leq \E[X] -t\bigr) &\leq \exp\biggl( -\frac{t^2}{2\E[X]}\biggr), \label{eq:Chernoff1}
    \end{align}
\end{lemma}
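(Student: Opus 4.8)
The plan is to use the standard exponential-moment (Bernstein--Chernoff) method, treating the binomial case first and then reducing the hypergeometric case to it. Write $\mu = \E[X]$. If $t > \mu$ the event $\{X \le \mu - t\}$ is contained in $\{X < 0\}$ and is empty, and the case $t = \mu$ follows by continuity (or by letting $\lambda \to \infty$ below), so it suffices to treat $0 \le t < \mu$. Suppose first $X = \sum_{i=1}^{N} X_i$ with the $X_i$ independent $\mathrm{Bernoulli}(q)$, so $\mu = Nq$. For any $\lambda > 0$, since $s \mapsto e^{-\lambda s}$ is decreasing, Markov's inequality (Lemma~\ref{lemma: markov}) applied to $e^{-\lambda X}$ gives
\begin{align*}
\prob\bigl(X \le \mu - t\bigr) = \prob\bigl(e^{-\lambda X} \ge e^{-\lambda(\mu - t)}\bigr) \le e^{\lambda(\mu - t)}\,\E\bigl[e^{-\lambda X}\bigr] = e^{\lambda(\mu - t)}\bigl(1 - q + q e^{-\lambda}\bigr)^{N},
\end{align*}
where the last step uses independence. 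Applying $1 + x \le e^{x}$ with $x = q(e^{-\lambda}-1)$ and using $Nq = \mu$ yields
\begin{align*}
\prob\bigl(X \le \mu - t\bigr) \le \exp\bigl(\lambda(\mu - t) + \mu(e^{-\lambda} - 1)\bigr).
\end{align*}

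Next I would optimize the exponent in $\lambda$. Choosing $\lambda = -\log(1 - t/\mu) > 0$ makes the right-hand side equal to $\exp\bigl(-\mu\, g(t/\mu)\bigr)$, where $g(x) = (1-x)\log(1-x) + x$. The proof is then finished by the elementary inequality $g(x) \ge x^{2}/2$ on $[0,1)$: both sides vanish at $x = 0$, and $g'(x) = -\log(1-x) \ge x = (x^{2}/2)'$ because $\log(1-x) \le -x$, so $g(x) - x^{2}/2$ is nondecreasing and hence nonnegative. Therefore $\prob(X \le \mu - t) \le \exp\bigl(-\mu (t/\mu)^{2}/2\bigr) = \exp\bigl(-t^{2}/(2\mu)\bigr)$, which is \eqref{eq:Chernoff1} in the binomial case.

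For the hypergeometric case I would reduce to the binomial one. If $X$ counts the successes in a size-$n$ sample drawn without replacement from an $M$-element population containing $K$ successes, then by Hoeffding's comparison theorem $\E[\varphi(X)] \le \E[\varphi(Y)]$ for every convex $\varphi \colon \R \to \R$, where $Y \sim \mathrm{Bin}(n, K/M)$ has the same mean $\mu$. Taking the convex function $\varphi(s) = e^{-\lambda s}$ shows that the bound $\E[e^{-\lambda X}] \le \exp\bigl(\mu(e^{-\lambda}-1)\bigr)$ still holds, so the displayed estimates and the optimization above go through verbatim; alternatively, one may simply invoke the statement as recorded in~\cite[Section~2.1]{JLR}. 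There is no genuine obstacle here: the only points requiring care are the scalar inequality $g(x) \ge x^{2}/2$ and the convexity/coupling input that lets the hypergeometric moment generating function be dominated by the binomial one.
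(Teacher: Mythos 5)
Your proof is correct. Note that the paper does not actually prove this lemma --- it is quoted directly from~\cite[Section 2.1]{JLR} --- so there is no in-paper argument to compare against. What you have written is the standard exponential-moment derivation (which is essentially the one in Janson, {\L}uczak, and Ruci\'nski): the Markov bound on $e^{-\lambda X}$, the estimate $1+x\le e^x$ to get the exponent $\lambda(\mu-t)+\mu(e^{-\lambda}-1)$, the optimal choice $\lambda=-\log(1-t/\mu)$, and the scalar inequality $(1-x)\log(1-x)+x\ge x^2/2$ on $[0,1)$; the reduction of the hypergeometric case to the binomial case via Hoeffding's convex-ordering theorem is also a standard and valid route (JLR themselves handle sampling without replacement by a closely related argument). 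All the steps check out, including the boundary cases $t\ge\mu$.
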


Later we will use random slicing with respect to our Szemer\'edi partition. The following technical lemma establishes that \whp~for all vertices, the proportion of neighbors in a set does not change by much if one chooses a random subset. This is a common argument, found in e.g. Balogh, Treglown, and Wagner~\cite{balogh2019tilings}.
\begin{lemma}\label{lemma: random slicing min deg}
Let $0< \alpha, \beta \leq 1$ and $0<\beta'\leq\beta$. Given a bipartite graph $G = (A,B;E)$, where $|A|=\alpha n$ and $|B|=\beta n$ if $B' \subseteq B$ is chosen uniformly at random from all sets of size $\beta'n$, then for every $a\in A$, $\E[\deg(a,B')]=\deg(a,B)\frac{\beta'}{\beta}$. 
Furthermore, \whp~it is the case that for each $a\in A$, we have $\deg(a,B') \geq \deg(a,B)\frac{\beta'}{\beta}-\sqrt{2n}\,\ln n$.
\end{lemma}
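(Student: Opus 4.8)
The plan is to prove this in two steps: first the expectation computation, then the concentration statement. For the expectation, fix $a \in A$ and let $d = \deg(a,B)$. When $B' \subseteq B$ is chosen uniformly among all $\binom{\beta n}{\beta' n}$ subsets of size $\beta' n$, the random variable $\deg(a,B') = |N(a) \cap B'|$ is hypergeometric: we are sampling $\beta' n$ elements from a population of $\beta n$ of which $d$ are ``good'' (the neighbors of $a$). Hence $\E[\deg(a,B')] = \beta' n \cdot \frac{d}{\beta n} = d \cdot \frac{\beta'}{\beta} = \deg(a,B)\frac{\beta'}{\beta}$, which is exactly the claimed identity. This step is immediate and requires no work beyond identifying the distribution.

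For the concentration statement, I would apply the lower-tail Chernoff bound (Lemma~\ref{lemma: chernoff}, inequality~\eqref{eq:Chernoff1}), which is stated to hold for hypergeometric random variables. Fix $a \in A$, write $X_a = \deg(a,B')$ and $\mu_a = \E[X_a] = \deg(a,B)\frac{\beta'}{\beta} \leq \beta' n \leq n$. Taking $t = \sqrt{2n}\,\ln n$ in~\eqref{eq:Chernoff1} gives
\[
  \prob\bigl(X_a \leq \mu_a - \sqrt{2n}\,\ln n\bigr) \leq \exp\biggl(-\frac{2n (\ln n)^2}{2\mu_a}\biggr) \leq \exp\biggl(-\frac{2n(\ln n)^2}{2n}\biggr) = \exp\bigl(-(\ln n)^2\bigr) = n^{-\ln n},
\]
using $\mu_a \leq n$ in the denominator.

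Finally I would take a union bound over all $a \in A$. Since $|A| = \alpha n \leq n$, the probability that some $a \in A$ fails is at most $n \cdot n^{-\ln n} = n^{1-\ln n} \to 0$ as $n \to \infty$. Therefore, \whp, $\deg(a,B') \geq \deg(a,B)\frac{\beta'}{\beta} - \sqrt{2n}\,\ln n$ for every $a \in A$ simultaneously, as required. I do not anticipate any genuine obstacle here: the only things to be mildly careful about are confirming that the Chernoff bound in the cited form genuinely applies to the hypergeometric setting (it does, per the statement of Lemma~\ref{lemma: chernoff}) and that the bound $\mu_a \le n$ used to simplify the exponent is valid, which holds since $\beta' \le \beta \le 1$. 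One could also note that the argument is robust to replacing $\sqrt{2n}\,\ln n$ by any error term that is $\omega(\sqrt{n \ln n})$, but the stated bound is clean and suffices.
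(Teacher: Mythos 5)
Your proposal is correct and follows essentially the same route as the paper: identify $\deg(a,B')$ as hypergeometric, apply the lower-tail Chernoff bound with $t=\sqrt{2n}\,\ln n$, and finish with a union bound over $|A|\leq n$. The only cosmetic difference is that you bound the mean by $n$ while the paper uses $\beta' n$, which does not affect the conclusion.
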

\begin{proof}
    The variable $X_a=\deg(a,B')$ is hypergeometric with parameter $\lambda=\E[X_a]=\deg(a,B)(\beta'n)/(\beta n)$ by~\cite[Theorem 2.10]{JLR}. inequality. Indeed, setting $t = \sqrt{2n}\,\ln n$ and using the union bound and then Chernoff's inequality~\eqref{eq:Chernoff1},
    \begin{align*}
        \prob\bigl(\exists a\in A : X_a \leq \E[X_a] - t\bigr) \leq \sum_{a \in A}\exp{\biggl(\frac{-t^2}{2\E[X_a]}\biggr)} \leq |A|\exp\biggl(-\frac{n (\ln n)^2}{\beta' n}\biggr).
    \end{align*}
    This goes to zero using the fact that $|A|\leq n$ and $\beta',\alpha<1$.
\end{proof}

\subsection{Tilings in bipartite graphs}
In proving Theorem~\ref{thm: main}, we will need to tile an auxiliary bipartite graph by copies of $K_{1,t}$, where $t$ is a large constant to be determined. Our method for accomplishing this is done by Theorem~\ref{thm: star tiling}. In order to state the theorem, we need some preliminaries.

\begin{definition}[Koml\'os~\cite{komlostilingturan}]\label{def:critical chromatic}
The critical chromatic number of a graph $F$, denoted $\chicr(F)$, is defined as 
\begin{align*}
    \chicr(F) = \frac{\bigl(\chi(F)-1\bigr)v_F}{v_F - \sigma(F)}
\end{align*}
where $\sigma(F)$ is the size of the smallest color class over all proper $\chi(F)$-colorings of $F$. 
\end{definition}


\begin{theorem}[Bush and Zhao~\cite{Bush}] [Theorem 1.5]\label{thm: star tiling}
Let $F$ be a bipartite graph. 
There exists $n_0$ and $c(F) < 8v_{F}^{2}$ such that every balanced bipartite graph $G$ on $n \geq n_0$ vertices contains an $F$-tiling covering all but at most $c(F)$ vertices if $\delta(G) \geq (1- 1/\chicr(F))n$.
\end{theorem}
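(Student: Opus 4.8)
The plan is to run the regularity method for tiling theorems developed by Koml\'os~\cite{komlostilingturan}, together with the refinement of Shokoufandeh and Zhao that lowers the number of uncovered vertices to a constant, everything adapted to the balanced bipartite setting; this is the route taken in~\cite{Bush}.

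\textbf{Regularity and the reduced graph.} First I would fix constants $0<\epsilon\ll d\ll 1/v_F$ and apply the degree form of the bipartite Szemer\'edi Regularity Lemma to $G$. After discarding an exceptional set of at most $\epsilon n$ vertices this yields, on each side of $G$, an equitable partition into $k$ clusters of common size $L$, with an associated reduced graph $R$: a balanced bipartite graph on $2k$ vertices whose edges are the $\epsilon$-regular pairs of density at least $d$. Since every vertex of $G$ has all of its $\geq(1-1/\chicr(F))n$ neighbours inside the opposite side, which has only $n/2$ vertices, the proportion of the opposite side covered is at least $2(1-1/\chicr(F)) = 2\sigma(F)/v_F$, and a routine averaging argument transfers this to $\delta(R)\geq\bigl(2\sigma(F)/v_F-\epsilon'\bigr)k$, where $\epsilon'\to 0$ as $\epsilon,d\to 0$. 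Before the main step I would incorporate the exceptional vertices in the usual way, using regularity and their large degree to place each into a copy of $F$ built from a few as-yet-unused cluster vertices; this removes only $o(n)$ vertices from the clusters and may be ignored afterwards.

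\textbf{A fractional $F$-tiling and its blow-up.} The combinatorial heart is the claim that a balanced bipartite graph on $2k$ vertices of minimum degree at least $(2\sigma(F)/v_F)k$ carries a perfect fractional $F$-tiling: nonnegative weights on its edges, each edge carrying a declared orientation recording which endpoint receives the colour class of size $\sigma(F)$ of the copies assigned to it, so that the total weight at every cluster equals $1$. This is the bipartite form of Koml\'os's tiling lemma, the quantity $2\sigma(F)/v_F = 2(1-1/\chicr(F))$ being exactly the reason the critical chromatic number appears and the threshold being best possible; feasibility is an LP-duality argument in which the minimum-degree hypothesis supplies precisely the Hall-type expansion needed to route the small colour classes. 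Since $R$ only satisfies $\delta(R)\geq(2\sigma(F)/v_F-\epsilon')k$, I would instead extract a fractional tiling covering each cluster to proportion $1-O(\epsilon')$, round it (with $L$ large) to an integral recipe prescribing, for each positively weighted regular pair, how many copies of $F$ to embed there and with which orientation, delete $O(\epsilon)L$ further vertices from each cluster to make the boundedly many active pairs super-regular, and apply the Blow-up Lemma in each of them; because $F$ has bounded order, the graph to be embedded in a pair is a disjoint union of copies of $F$ of maximum degree $<v_F$, so this is routine, and one could instead embed the copies one at a time into shrinking $\epsilon$-regular subpairs. The output is an $F$-tiling of $G$ covering all but $O(\epsilon')n$ vertices.

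\textbf{From $o(n)$ to a constant.} The final, and in my view hardest, step is to improve the $O(\epsilon')n$ uncovered vertices to at most $c(F)<8v_F^2$. This is precisely the bipartite case of Koml\'os's conjecture settled by Shokoufandeh and Zhao: one arranges the uncovered set to meet only a bounded number of regular pairs, and then shows that, within the structure forced by the minimum-degree condition, all but $O(v_F)$ of the uncovered vertices of such a pair can be absorbed by locally re-tiling $O(1)$ of the existing copies of $F$, the residual obstruction being a divisibility discrepancy confined to a single pair and contributing at most $2v_F$ uncovered vertices in each class, whence the stated bound on $c(F)$. Carrying this reconfiguration argument out uniformly in $F$ and with an explicit constant is the delicate part; the rest is standard regularity bookkeeping.
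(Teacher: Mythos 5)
First, a point of context: the paper does not prove this statement at all --- it is quoted as Theorem 1.5 of Bush and Zhao~\cite{Bush} and used as a black box --- so there is no internal proof to compare against, and your proposal has to stand on its own as a proof of the cited result.

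As such, it has a genuine gap. The first two stages of your outline (bipartite regularity, transferring $\delta(G)\geq\bigl(1-1/\chicr(F)\bigr)n=\bigl(\sigma(F)/v_F\bigr)n$ to a reduced-graph degree of roughly $2\sigma(F)/v_F$ relative to the opposite side, a near-perfect fractional $F$-tiling with orientations, rounding, and the Blow-up Lemma) are the standard route and would indeed give an $F$-tiling covering all but $O(\epsilon')n$ vertices; that much is a Koml\'os-type almost-tiling statement. But the entire content of the theorem beyond this is your final step: reducing the leftover from $o(n)$ to a constant, with the explicit bound $c(F)<8v_F^2$. There you assert that this is ``precisely the bipartite case of Koml\'os's conjecture settled by Shokoufandeh and Zhao,'' sketch a local re-tiling/absorption in a couple of sentences, and then concede that carrying it out ``uniformly in $F$ and with an explicit constant is the delicate part.'' Shokoufandeh and Zhao proved Koml\'os's conjecture for general host graphs; adapting it to balanced bipartite hosts with this normalization and extracting a constant below $8v_F^2$ is exactly what Bush and Zhao's Theorem 1.5 does, so at the decisive point your argument either implicitly re-cites the result being proved or leaves the absorption argument unexecuted --- in particular nothing substantiates the claim that the uncovered vertices can be confined to one pair up to a divisibility discrepancy of $O(v_F)$ per class, nor the accounting that leads to $8v_F^2$. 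If what you intend is the cleaner reduction --- apply Shokoufandeh--Zhao directly to $G$ itself, since the hypothesis $\delta(G)\geq\bigl(1-1/\chicr(F)\bigr)n$ is verbatim the Koml\'os threshold for an $n$-vertex host graph --- then say so explicitly and verify that their constant can be taken below $8v_F^2$ (or settle for an unspecified $c(F)$, which would still suffice for this paper's application). As written, the passage from $\epsilon n$ uncovered vertices to a constant is a statement of intent rather than a proof.
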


Applying Theorem~\ref{thm: star tiling} to the star graph, $F=K_{1,t}$, which has $\chicr\bigl(K_{1,t}\bigr)=(t+1)/t$, and removing some copies of $K_{1,t}$ to ensure that there are an equal number of stars rooted in each side of the bipartition. We obtain the following Corollary: 

\begin{corollary}\label{cor:bipartite stars}
Let $t$ be a positive integer. There exists $n_0$ and $c(t) < 8(t+1)^{2}\frac{t}{t-1}$ such that every balanced bipartite graph $G$ on $n \geq n_0$ vertices contains a balanced $K_{1,t}$-tiling covering all but at most $c(t)$ vertices in each part if $\delta(G) \geq n/(t+1)$.
\end{corollary}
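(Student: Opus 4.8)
The plan is to apply Theorem~\ref{thm: star tiling} with $F = K_{1,t}$ and then prune the resulting tiling to make it balanced. First I would record the parameters of the star: $v_F = t+1$, $\chi(F) = 2$, and the smallest colour class in a proper $2$-colouring is the single centre, so $\sigma(F) = 1$ and $\chicr(K_{1,t}) = \frac{(2-1)(t+1)}{(t+1)-1} = \frac{t+1}{t}$. Hence $1 - 1/\chicr(F) = \frac{1}{t+1}$, so the hypothesis $\delta(G) \ge n/(t+1)$ is precisely what Theorem~\ref{thm: star tiling} demands. Taking $n_0$ to be the constant that theorem provides for this $F$, we obtain a $K_{1,t}$-tiling $\mathcal{T}$ of $G$ leaving fewer than $8 v_F^2 = 8(t+1)^2$ vertices uncovered.

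Next I would balance $\mathcal{T}$. Write $A, B$ for the two parts of $G$, each of size $n/2$; let $a$ (resp.\ $b$) be the number of stars of $\mathcal{T}$ with centre in $A$ (resp.\ $B$), and let $r$ (resp.\ $s$) be the number of uncovered vertices of $A$ (resp.\ $B$), so $r + s < 8(t+1)^2$. Double-counting the covered vertices of each part gives $a + bt + r = \tfrac{n}{2} = at + b + s$, whence $(a-b)(t-1) = r - s$. Assuming without loss of generality $a \ge b$ (so also $r \ge s$, using $t \ge 2$), I would delete from $\mathcal{T}$ any $a - b = \frac{r-s}{t-1}$ of the stars centred in $A$ — this is possible since $a - b \le a$ — leaving a $K_{1,t}$-tiling with exactly $b$ stars centred in each part, i.e.\ a balanced one.

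Finally I would estimate its deficiency. A balanced $K_{1,t}$-tiling with $b$ stars per side covers exactly $b(t+1)$ vertices of each part, so it leaves $\tfrac{n}{2} - b(t+1)$ vertices uncovered in each part; substituting $\tfrac{n}{2} = a + bt + r$ this is $(a-b) + r = \frac{rt - s}{t-1} \le \frac{(r+s)t}{t-1} < \frac{8(t+1)^2 t}{t-1}$. Taking $c(t) := \bigl\lceil 8(t+1)^2 t/(t-1) \bigr\rceil - 1$, which is an integer strictly below the claimed bound and at least as large as this deficiency, completes the proof.

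I expect the only delicate point to be this last estimate: removing stars to equalise the two sides could in principle inflate the number of uncovered vertices, but the identity $(a-b)(t-1) = r-s$ shows only $O(1)$ stars are removed, and the fact that a balanced star-tiling leaves equally many uncovered vertices on each side keeps both part-deficiencies controlled simultaneously. (The case $t=1$ is degenerate, as the factor $t/(t-1)$ indicates, so one reads the statement with $t \ge 2$.)
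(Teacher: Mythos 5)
Your proposal is correct and follows essentially the same route as the paper: apply Theorem~\ref{thm: star tiling} with $F=K_{1,t}$ (using $\chicr(K_{1,t})=(t+1)/t$, so $\delta(G)\geq n/(t+1)$ suffices) and then delete a bounded number of stars to equalize the number of centres on the two sides, with your identity $(a-b)(t-1)=r-s$ supplying the quantitative bookkeeping behind the bound $c(t) < 8(t+1)^{2}\tfrac{t}{t-1}$ that the paper leaves implicit.
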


We will also make use of a bipartite version of the degree form of Szemer\'{e}di's Regularity~\cite{szemreglem} (our Lemma~\ref{lemma: multipartite regularity} below). Before stating Lemma~\ref{lemma: multipartite regularity}, we first make the necessary preparations. 

\subsection{Epsilon-regular pairs}
For disjoint vertex sets $A$ and $B$, let $e(A,B)$ denote the number of edges with an endpoint in $A$ and an endpoint in $B$. Some definitions in papers using Szemer\'edi's regularity lemma vary slightly, we will follow the definitions in~\cite{balogh2019tilings}.

\begin{definition}
For disjoint vertex sets $A$ and $B$, the \textit{density} between $A$ and $B$ is
\begin{align*}
    d_G(A,B):= \frac{e(A,B)}{|A||B|}.
\end{align*}

Given $\epsilon>0$, we say that a pair of disjoint vertex sets $(A,B)$ is \textit{$\epsilon$-regular} if for all sets $X \subseteq A$, $Y\subseteq B$ with $|X| \geq \epsilon |A|$ and $|Y| \geq \epsilon |B|$ we have 
\begin{align*}
    |d_{G}(A,B) - d_{G}(X,Y) | < \epsilon.
\end{align*}
Given $d \in [0,1]$, we say that a pair of disjoint vertex sets $(A,B)$ is \textit{$(\epsilon,d)$-super-regular} if the following two properties hold:
\begin{itemize}
    \item for all sets $X \subseteq A$, $Y\subseteq B$ such that $|X| \geq \epsilon |A|$ and $|Y| \geq \epsilon |B|$, we have $d_{G}(X,Y) >d$;
    \item for all $a \in A$ and $b \in B$, we have $\deg_{G}(a) > d|B|$ and $\deg_{G}(b) >d|A|$.
\end{itemize}

\end{definition}



We will use several well-known properties of $\epsilon$-regular-pairs. The proofs can be found, e.g., in Treglown~\cite{treglown2007regularity} and are omitted here.


\begin{lemma}[Slicing Lemma, \cite{treglown2007regularity}, Lemma 1.6]\label{lemma: deterministic slicing}
Let $0 < \epsilon<\alpha $ and $\epsilon':= \max \{\epsilon/\alpha, 2\epsilon\}$. Let $(A,B)$ be an $\epsilon$-regular pair with density $d$. Suppose $A' \subseteq A$ such that $|A'| \geq \alpha|A|$, and $B'\subseteq B$ such that $|B'| \geq \alpha |B|$. Then $(A',B')$ is an $\epsilon'$-regular pair with density $d'$ where $|d'-d| < \epsilon$.
\end{lemma}

We will regularly make use of the following immediate consequence of Lemma~\ref{lemma: deterministic slicing}. 
\begin{corollary}\label{cor: slicing}
Let $(A,B)$ is an $(\epsilon,d)$-super-regular pair with $|A|=|B|=L$. If at most $\epsilon_1 L$ vertices are removed from each of $A$ and $B$ to obtain $A' \subseteq A$ and $B'\subseteq B$, then $(A',B')$ is $(\epsilon',d-\epsilon_1)$-super-regular with $\epsilon' = \max \{ \epsilon/\epsilon_1, 2\epsilon\}$.
\end{corollary}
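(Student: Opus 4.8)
The plan is to show that this really is ``immediate'': apply the Slicing Lemma (Lemma~\ref{lemma: deterministic slicing}) to obtain the regularity, and then check the two bullets in the definition of $(\epsilon',d-\epsilon_1)$-super-regularity by direct counting. First I would record that deleting at most $\epsilon_1 L$ vertices from each of $A$ and $B$ leaves $|A'|\ge (1-\epsilon_1)L$ and $|B'|\ge (1-\epsilon_1)L$, so in particular $|A'|\ge \epsilon_1|A|$ and $|B'|\ge \epsilon_1|B|$ (using the harmless normalization $\epsilon_1\le 1/2$; we likewise tacitly assume $\epsilon<\epsilon_1$, since both inequalities hold wherever the corollary is invoked). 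Applying Lemma~\ref{lemma: deterministic slicing} with parameter $\alpha:=\epsilon_1$ then yields at once that $(A',B')$ is $\epsilon'$-regular with $\epsilon'=\max\{\epsilon/\epsilon_1,2\epsilon\}$, exactly as claimed, together with $|d_G(A',B')-d_G(A,B)|<\epsilon$ (the latter is not even needed below).

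For the first bullet of super-regularity, take $X\subseteq A'$ and $Y\subseteq B'$ with $|X|\ge\epsilon'|A'|$ and $|Y|\ge\epsilon'|B'|$. Since $\epsilon'\ge 2\epsilon$ and $|A'|\ge(1-\epsilon_1)L\ge L/2$, this forces $|X|\ge \epsilon L=\epsilon|A|$, and similarly $|Y|\ge\epsilon|B|$; hence the first bullet for the original $(\epsilon,d)$-super-regular pair $(A,B)$ applies and gives $d_G(X,Y)>d\ge d-\epsilon_1$. For the second bullet, fix $a\in A'\subseteq A$: super-regularity of $(A,B)$ gives $\deg_G(a,B)>d|B|=dL$, and passing from $B$ to $B'$ removes at most $\epsilon_1 L$ of these neighbours, so $\deg_G(a,B')>dL-\epsilon_1 L=(d-\epsilon_1)L\ge(d-\epsilon_1)|B'|$, where the last step uses $|B'|\le L$ when $d-\epsilon_1\ge 0$ and is trivial when $d-\epsilon_1<0$. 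The estimate for $b\in B'$ is identical with the roles of $A$ and $B$ swapped. Together these verify that $(A',B')$ is $(\epsilon',d-\epsilon_1)$-super-regular.

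I do not anticipate any genuine obstacle here; the argument is pure bookkeeping. The only points needing a moment's care are the inequality $\epsilon'|A'|\ge 2\epsilon\cdot(L/2)=\epsilon|A|$ — which is precisely why the term $2\epsilon$ appears in $\epsilon'=\max\{\epsilon/\epsilon_1,2\epsilon\}$ — and keeping track that the normalizations $\epsilon<\epsilon_1\le 1/2$ (automatic in every application) are what make both the invocation of Lemma~\ref{lemma: deterministic slicing} with $\alpha=\epsilon_1$ legitimate and the size bounds on $A'$ and $B'$ go through.
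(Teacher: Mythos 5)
Your proposal is correct and follows the route the paper intends when it calls the corollary an ``immediate consequence'' of Lemma~\ref{lemma: deterministic slicing}: slice, then verify the two bullets of super-regularity by direct counting, using $\epsilon_1\le 1/2$ so that $\epsilon'|A'|\ge 2\epsilon\cdot(1-\epsilon_1)L\ge\epsilon|A|$ and the degree loss is at most $\epsilon_1 L$. Note only that with the paper's definition of $(\epsilon,d)$-super-regularity (density lower bound for large sets plus degrees, not $\epsilon$-regularity), your two direct checks already suffice, so the appeal to Lemma~\ref{lemma: deterministic slicing} is harmless but not strictly needed.
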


We will also make use of Lemma~\ref{lemma:random slicing}, which states that randomly slicing a super-regular pair into a large constant number of parts ensures that each pair of parts is super-regular with relaxed parameters. The result is a ``whp'' result and this holds as long as $m=|A|=|B|$ goes to infinity. 

\begin{lemma}[Random Slicing, \cite{csaba2012approximate}, Proposition 10]\label{lemma:random slicing}
Let $(A, B)$ be an $(\epsilon,d)$-super-regular pair with density $d(A,B)$ and let $k$ be a positive integer. Assume that $|A| = |B| = m$, and $k\mid m$. Partition $A$ and $B$ into $k$ equally-sized subsets uniformly at random:
$A = A_1 \sqcup A_2 \sqcup \dots \sqcup A_k$ and $B = B_1 \sqcup B_2 \sqcup \dots \sqcup B_k$. Then whp for every $i,j\in [k]$, the pair $(A_i, B_j)$ is $(\epsilon', d')$-super-regular with density at least $d(A,B)-\epsilon$, where $\epsilon' \leq 2\epsilon$ and $d - \epsilon \leq d'$.
\end{lemma}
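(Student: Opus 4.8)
The plan is to fix a single pair $(A_i,B_j)$, verify the three clauses of super-regularity for it with failure probability $e^{-\Omega(m)}$ where $m=|A|=|B|$, and then union-bound over the $k^2$ pairs at the cost of a constant factor. Throughout write $d_0:=d_G(A,B)$, noting $d_0>d$ since $(A,B)$ is $(\epsilon,d)$-super-regular.

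The minimum-degree and density clauses I would dispatch by elementary concentration. For $a\in A$ the count $\deg(a,B_j)$ is hypergeometric with mean $\deg(a,B)/k>d|B_j|$, so Chernoff (Lemma~\ref{lemma: chernoff}) gives $\deg(a,B_j)>(d-\epsilon)|B_j|$ off a set of probability $e^{-\Omega(\epsilon^2 m/k)}$; a union bound over the at most $2m$ vertices settles the minimum-degree clause with $d':=d-\epsilon$. Likewise $e(A_i,B_j)$ has mean $d_0|A_i||B_j|$ and relocating one vertex changes it by at most $m$, so Azuma over the $2m$ vertex-assignments gives $d_G(A_i,B_j)=d_0\pm\epsilon/2$, hence density at least $d_0-\epsilon\ge d-\epsilon$.

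The genuinely hard clause is $\epsilon'$-regularity, and I would \emph{not} try to union-bound ``$d_G(X,Y)$ far from $d_0$'' directly over candidate pairs $X\subseteq A_i,\,Y\subseteq B_j$. When $|X|,|Y|\ge\epsilon m$ that statement is already implied by the $\epsilon$-regularity of $(A,B)$ together with the density concentration above (and would, for those sets, give essentially the same $\epsilon$, compare the deterministic Slicing Lemma~\ref{lemma: deterministic slicing}). The trouble is sets of size only about $\epsilon'|A_i|=2\epsilon m/k$, which for $k\ge2$ lie far below the scale $\epsilon m$ at which $(A,B)$'s regularity says anything: there are roughly $(e/2\epsilon)^{2\epsilon m/k}$ of them inside a slice, whereas the sharpest concentration available for $e(X,Y)$ over a uniformly random pair of that size is only $e^{-\Omega(\epsilon^2 m/k)}$ -- an entropy-versus-fluctuation mismatch ($m\log(1/\epsilon)$ against $\epsilon^2 m$) that no naive union bound overcomes. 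This is the real obstacle.

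To get around it I would certify regularity through a statistic that is \emph{robust} under the random slicing: the number of labelled $4$-cycles. Recall the deviation ($4$-cycle counting) form of the regularity lemma: there is a function $g(\eta)\to0$ as $\eta\to0$ such that if $d_G(P,Q)=\rho$ and $(P,Q)$ spans at most $(\rho^4+\eta)|P|^2|Q|^2$ labelled $4$-cycles then $(P,Q)$ is $g(\eta)$-regular, while $\epsilon$-regularity of $(A,B)$ already forces its $4$-cycle count to be at most $(d_0^4+O(\epsilon))m^4$. Let $Q_{ij}$ count labelled $4$-cycles inside $(A_i,B_j)$; each uses two vertices on each side, so $\E[Q_{ij}]\le(d_0^4+O(\epsilon))|A_i|^2|B_j|^2+O(m^3/k^3)$, and relocating one vertex changes $Q_{ij}$ by only $O(m^3)=o(m^4)$, whence Azuma--Hoeffding over the $2m$ vertex-assignments gives $\prob(|Q_{ij}-\E[Q_{ij}]|>\eta m^4)\le e^{-\Omega(m)}$. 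So whp every $(A_i,B_j)$ spans at most $(d_0^4+O(\epsilon)+\eta)|A_i|^2|B_j|^2$ labelled $4$-cycles, the converse counting lemma makes it $g(O(\epsilon)+\eta)$-regular, and taking $\eta$ and then $\epsilon$ small enough makes this as small as desired; combined with the two clauses above this gives $(A_i,B_j)$ super-regular with density at least $d_0-\epsilon$ and $d'\ge d-\epsilon$. The remaining point -- pinning the regularity parameter down to exactly $\epsilon'\le2\epsilon$ rather than merely $o_\epsilon(1)$ -- requires the counting lemma invoked in quantitative form, and is the careful bookkeeping carried out in~\cite{csaba2012approximate}.
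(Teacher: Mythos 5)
This lemma is not proved in the paper at all: it is imported verbatim from \cite{csaba2012approximate} (Proposition 10), so there is no internal argument to compare yours against, and I can only assess the proposal on its own terms.

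On those terms, the easy clauses are handled correctly (hypergeometric concentration for the degrees, a permutation-martingale bound for $e(A_i,B_j)$, union bound over the constantly many pairs and the $2m$ vertices), and you rightly diagnose why a naive union bound over witness sets at scale $2\epsilon m/k$ cannot work. The problem is that the route you then choose for the regularity clause cannot prove the lemma as stated. The converse ($C_4$-counting) lemma converts a $4$-cycle surplus of $\eta$ into $g(\eta)$-regularity only with a polynomial loss (fourth-root type), and $\epsilon$-regularity of $(A,B)$ pins its $C_4$ count only to within an additive $O(\epsilon)m^4$ of $d_0^4m^4$; so the best your argument can return is that each slice is regular with parameter of order $\epsilon^{1/4}$, and correspondingly the one-sided density bound $d(X,Y)>d'$ holds only for sets of that larger fractional size, with a value of $d'$ that need not reach $d-\epsilon$. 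The statement claims $\epsilon'\le 2\epsilon$ and $d'\ge d-\epsilon$. Your closing sentence defers exactly this to ``careful bookkeeping'' in \cite{csaba2012approximate}, but it is not bookkeeping: no quantitative form of the $C_4$ counting lemma closes the gap between $\epsilon$ and $\epsilon^{1/4}$, so the sharp parameters require a genuinely different argument from the one you outline. What your proposal establishes is a qualitatively similar lemma with $\epsilon'=f(\epsilon)\to 0$ as $\epsilon\to 0$; that weaker form would in fact suffice for every use of random slicing in this paper, where the hierarchy $\epsilon\ll\epsilon_1\ll\epsilon_2$ means only ``some small $\epsilon'$'' is ever needed, but it is not a proof of the lemma as stated.
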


Lemma~\ref{lemma: super-regularity} shows that every regular pair contains a large super-regular pair. 

\begin{lemma}[\cite{treglown2007regularity}, Lemma 1.8]\label{lemma: super-regularity}
If $(A,B)$ is an $\epsilon$-regular pair with density $d$ in a graph $G$ (where $0< \epsilon < 1/3)$, then there exists $A' \subseteq A$ and $B' \subseteq B$ with $|A'| \geq (1-\epsilon) |A|$ and $|B'| \geq (1-\epsilon)|B|$, such that $(A',B')$ is a $(2\epsilon,d-3\epsilon)$-super-regular pair. 
\end{lemma}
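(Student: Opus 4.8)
The plan is the standard ``delete the low-degree vertices'' argument. First I would isolate the vertices with a degree defect: set $A_{\mathrm{bad}} := \{a\in A : \deg(a,B) < (d-\epsilon)|B|\}$ and, symmetrically, $B_{\mathrm{bad}} := \{b\in B : \deg(b,A) < (d-\epsilon)|A|\}$. The key preliminary claim is that $|A_{\mathrm{bad}}| < \epsilon|A|$ and $|B_{\mathrm{bad}}| < \epsilon|B|$. Indeed, if $|A_{\mathrm{bad}}| \geq \epsilon|A|$ then, since also $|B|\geq \epsilon|B|$, the $\epsilon$-regularity of $(A,B)$ forces $d_G(A_{\mathrm{bad}},B) > d-\epsilon$; but $e(A_{\mathrm{bad}},B) = \sum_{a\in A_{\mathrm{bad}}}\deg(a,B) < |A_{\mathrm{bad}}|(d-\epsilon)|B|$ gives $d_G(A_{\mathrm{bad}},B) < d-\epsilon$, a contradiction (and the case $A_{\mathrm{bad}}=\emptyset$ is trivial). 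The argument for $B_{\mathrm{bad}}$ is identical.

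Then I would take $A' := A\setminus A_{\mathrm{bad}}$ and $B' := B\setminus B_{\mathrm{bad}}$, so that $|A'| > (1-\epsilon)|A|$ and $|B'| > (1-\epsilon)|B|$, and verify the two defining conditions of $(2\epsilon,\,d-3\epsilon)$-super-regularity. For the density-on-large-subsets condition, take $X\subseteq A'$, $Y\subseteq B'$ with $|X|\geq 2\epsilon|A'|$ and $|Y|\geq 2\epsilon|B'|$; since $\epsilon < 1/3$ implies $1-\epsilon > 1/2$, we get $|X| \geq 2\epsilon(1-\epsilon)|A| \geq \epsilon|A|$ and likewise $|Y|\geq \epsilon|B|$, so $\epsilon$-regularity of $(A,B)$ yields $d_G(X,Y) > d-\epsilon > d-3\epsilon$. (Alternatively, this is immediate from the Slicing Lemma, Lemma~\ref{lemma: deterministic slicing}, which in fact shows $(A',B')$ is $2\epsilon$-regular.) For the minimum-degree condition, fix $a\in A'$: since $a\notin A_{\mathrm{bad}}$ we have $\deg(a,B)\geq (d-\epsilon)|B|$, and deleting $B_{\mathrm{bad}}$ removes fewer than $\epsilon|B|$ of its neighbours, so $\deg(a,B') > (d-2\epsilon)|B| \geq (d-3\epsilon)|B'|$ — the last inequality being trivial when $d-3\epsilon < 0$ and otherwise following from $|B'|\leq |B|$. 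The estimate for each $b\in B'$ is symmetric, giving $\deg(b,A') > (d-3\epsilon)|A'|$.

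I do not expect a substantive obstacle; the only delicate point is the bookkeeping of the regularity parameters, namely checking that deleting an $\epsilon$-fraction of vertices from each side is harmless relative to the relaxed parameter $2\epsilon$ (so that a $2\epsilon$-fraction of $A'$ or $B'$ remains an $\epsilon$-fraction of $A$ or $B$ — this is exactly where the hypothesis $\epsilon < 1/3$ enters) and that the single pair of deletions $A\to A'$, $B\to B'$ simultaneously cures the degree defect on both sides without damaging the density condition.
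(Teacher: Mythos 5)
Your argument is correct and is exactly the standard proof of this lemma, which the paper itself omits (it simply cites Treglown \cite{treglown2007regularity}, Lemma 1.8, where the same ``delete the fewer than $\epsilon|A|$, resp.\ $\epsilon|B|$, low-degree vertices'' argument appears). The parameter bookkeeping you give — $2\epsilon(1-\epsilon)\geq\epsilon$ for the density condition and $\deg(a,B')>(d-2\epsilon)|B|\geq(d-3\epsilon)|B'|$ for the degree condition — is sound, so nothing further is needed.
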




The Intersection Property, Lemma~\ref{lemma: intersection property}, and Corollary~\ref{cor: bipartite copies}, which immediately follows, establishes that $\epsilon$-regular pairs contain many copies of complete bipartite graphs.

\begin{lemma}[Intersection Property,~\cite{komlos1996regularity}, Fact 1.4] \label{lemma: intersection property}
Let $0< \epsilon<d$ and $h\geq 1$ be an integer. Let $(A,B)$ be $\epsilon$-regular with density $d$. If $Y \subseteq B$ and $(d-\epsilon)^{h-1}|Y| \geq \epsilon|B|$, then
\begin{align*}
    \Biggl| \Biggl\{(a_1, a_2, \dots,a_h) \in \binom{A}{h} : \biggl|Y \cap \biggl( \bigcap_{i=1}^{h} N(a_i)\biggr)\biggr| < (d-\epsilon)^{h}|Y| \Biggr\} \Biggr| \leq h\epsilon |A|^{h}.
\end{align*}
\end{lemma}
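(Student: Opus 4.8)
The plan is to prove the Intersection Property by induction on $h$, where the case $h=1$ serves both as the base case and as the engine of the inductive step. For $h=1$ the hypothesis reads $|Y|\ge\epsilon|B|$. Let $X\subseteq A$ be the set of vertices $a$ with $|Y\cap N(a)|<(d-\epsilon)|Y|$. If $|X|\ge\epsilon|A|$, then $\epsilon$-regularity of $(A,B)$ applied to $X$ and $Y$ gives $d_G(X,Y)>d_G(A,B)-\epsilon=d-\epsilon$, while $e(X,Y)=\sum_{a\in X}|Y\cap N(a)|<|X|(d-\epsilon)|Y|$ forces $d_G(X,Y)<d-\epsilon$, a contradiction; hence $|X|<\epsilon|A|$, which is the claimed bound for $h=1$.

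For the inductive step, assume $h\ge 2$ and that the statement holds for $h-1$. I would fix a ``bad'' tuple $(a_1,\dots,a_h)$, meaning $\bigl|Y\cap\bigcap_{i=1}^h N(a_i)\bigr|<(d-\epsilon)^h|Y|$, set $Y'=Y\cap\bigcap_{i=1}^{h-1}N(a_i)$, and split into two mutually exclusive cases according to the prefix $(a_1,\dots,a_{h-1})$. If $|Y'|<(d-\epsilon)^{h-1}|Y|$, the prefix is itself bad for parameter $h-1$; since $d-\epsilon<1$ (as $d\le 1$), the hypothesis $(d-\epsilon)^{h-1}|Y|\ge\epsilon|B|$ implies $(d-\epsilon)^{h-2}|Y|\ge\epsilon|B|$, so the induction hypothesis applies and there are at most $(h-1)\epsilon|A|^{h-1}$ such prefixes, each extending to at most $|A|$ tuples, for at most $(h-1)\epsilon|A|^h$ bad tuples of this type.

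If instead $|Y'|\ge(d-\epsilon)^{h-1}|Y|$, then $|Y'|\ge\epsilon|B|$ by hypothesis, so the base case applies with $Y'$ in place of $Y$: at most $\epsilon|A|$ vertices $a_h$ satisfy $|Y'\cap N(a_h)|<(d-\epsilon)|Y'|$, and for any other $a_h$ we get $\bigl|Y\cap\bigcap_{i=1}^h N(a_i)\bigr|=|Y'\cap N(a_h)|\ge(d-\epsilon)|Y'|\ge(d-\epsilon)^h|Y|$, so the tuple is good. Thus each of the at most $|A|^{h-1}$ good prefixes extends to at most $\epsilon|A|$ bad tuples, contributing at most $\epsilon|A|^h$ bad tuples of this type. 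Adding the two contributions yields at most $h\epsilon|A|^h$ bad tuples, completing the induction.

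I do not expect a genuine obstacle here; the argument is routine once the induction is organized this way. The only points demanding care are bookkeeping: verifying that the size hypothesis on $|Y|$ propagates correctly down the induction (precisely where $d-\epsilon<1$ is used), and confirming that the two cases partition the bad tuples so nothing is double counted. If the notation $\binom{A}{h}$ is intended to restrict to tuples of distinct vertices, this only decreases every count above, so the stated bound is unaffected.
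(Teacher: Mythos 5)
Your proof is correct, and since the paper omits the argument entirely (citing Fact 1.4 of Koml\'os--Simonovits), there is nothing to contrast: your induction on $h$, with the $h=1$ case obtained by applying $\epsilon$-regularity to the set of low-degree vertices and the step splitting bad $h$-tuples according to whether the $(h-1)$-prefix is already bad, is precisely the standard proof given in the cited source. The bookkeeping (using $0<d-\epsilon\le 1$ to propagate the size hypothesis on $Y$, and the disjointness of the two cases giving $(h-1)\epsilon|A|^h+\epsilon|A|^h=h\epsilon|A|^h$) is handled correctly.
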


\begin{corollary}\label{cor: bipartite copies}    
Let $k,k'$ be positive integers. If $(A,B)$ is an $\epsilon$-regular pair with density $d$ such that 
\begin{itemize}
    \item $(d-\epsilon)^{k-1} \geq \epsilon$,
    \item $k\epsilon |A|^{k} < (|A|)_{k}$,
    \item $(d-\epsilon)^{k}|B|\geq k'$.
\end{itemize}
Then $(A,B)$ contains a copy of $K_{k,k'}$ with $k$ vertices in $A$ and $k'$ vertices in $B$. In particular, if $\epsilon <1/k$ and $d \geq \epsilon^{1/(k-1)} + \epsilon$ and $|A|,|B|$ are large enough, then $(A,B)$ has a copy of $K_{k,k'}$.
\end{corollary}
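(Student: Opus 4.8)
The plan is to read Corollary~\ref{cor: bipartite copies} off of the Intersection Property (Lemma~\ref{lemma: intersection property}) applied with $Y = B$ and $h = k$. With $Y = B$ the hypothesis $(d-\epsilon)^{h-1}|Y| \geq \epsilon|B|$ of Lemma~\ref{lemma: intersection property} becomes precisely the first bulleted condition $(d-\epsilon)^{k-1} \geq \epsilon$. Thus the lemma yields that all but at most $k\epsilon|A|^{k}$ of the $k$-tuples $(a_1,\dots,a_k)$ of distinct vertices of $A$ satisfy $\bigl|B \cap \bigcap_{i=1}^{k} N(a_i)\bigr| \geq (d-\epsilon)^{k}|B|$.

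Next I would observe that the number of $k$-tuples of distinct vertices of $A$ is $(|A|)_k$, so the second bulleted condition $k\epsilon|A|^{k} < (|A|)_k$ forces the existence of at least one such ``good'' $k$-tuple $(a_1,\dots,a_k)$; write $W := B \cap \bigcap_{i=1}^{k} N(a_i)$, so that $|W| \geq (d-\epsilon)^{k}|B| \geq k'$ by the third bullet. Choosing any $k'$ distinct vertices $b_1,\dots,b_{k'} \in W$, the set $\{a_1,\dots,a_k\} \cup \{b_1,\dots,b_{k'}\}$ spans a copy of $K_{k,k'}$ with its $k$-part inside $A$ and its $k'$-part inside $B$, which is exactly what is claimed.

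For the final ``in particular'' sentence I would simply check that $\epsilon < 1/k$ and $d \geq \epsilon^{1/(k-1)} + \epsilon$ imply the three bullets once $|A|$ and $|B|$ are large. Indeed $d - \epsilon \geq \epsilon^{1/(k-1)}$ gives $(d-\epsilon)^{k-1} \geq \epsilon$; since $k\epsilon < 1$ while $(|A|)_k/|A|^{k} \to 1$ as $|A|\to\infty$, the inequality $k\epsilon|A|^{k} < (|A|)_k$ holds for $|A|$ large; and $(d-\epsilon)^{k}|B| \geq \epsilon^{k/(k-1)}|B| \to \infty$, so it exceeds $k'$ for $|B|$ large. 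I do not anticipate a genuine obstacle here: the corollary is essentially a convenient repackaging of Lemma~\ref{lemma: intersection property}, and the only point requiring a little care is the bookkeeping of ordered tuples of distinct vertices, so that the counting bound $k\epsilon|A|^{k}$ and the total $(|A|)_k$ refer to the same family of objects.
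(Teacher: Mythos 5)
Your proof is correct and is exactly the argument the paper intends: the corollary is stated as following immediately from the Intersection Property (Lemma~\ref{lemma: intersection property}) with $Y=B$ and $h=k$, and your verification of the three bullets plus the ``in particular'' clause fills in precisely that routine deduction. Your remark about matching the counting of tuples of distinct vertices against the bound $k\epsilon|A|^{k}$ versus the total $(|A|)_k$ is the only point of care, and you handle it correctly.
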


Corollary~\ref{cor: (v,A,B)} establishes that, given a graph $G$ containing an $(\epsilon,d)$-super-regular pair $(A,B)$ and a vertex $v$ outside of $A\cup B$ such that $v$ has a large neighborhood in $A$, then there exists a copy of $K_{h,h}$ where all vertices except $v$ are in $A\cup B$. 

\begin{corollary}
    \label{cor: (v,A,B)}
    Let $0<\epsilon \ll d \ll d'$ and $h\geq 1$ be a positive integer.
    Let $(A,B)$ be a $(\epsilon, d)$-super-regular pair  with $(1-\epsilon)n \leq |A|,|B| \leq n$ and let $v\not\in A\cup B$. If $\deg(v, A) \geq d'|A|$, then there exists a copy of $K_{h,h}$ which contains $v$, $h$ vertices in $A$, and $h-1$ vertices in $B$.
\end{corollary}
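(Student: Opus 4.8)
The plan is to build the copy of $K_{h,h}$ greedily in two stages, first locating the $h$ vertices inside $A$ (one of the two color classes, together with $v$) and then the $h-1$ vertices inside $B$, at each stage using super-regularity to guarantee that the relevant common neighborhood is large. First I would set $X := N(v)\cap A$, so $|X|\geq d'|A| \geq d'(1-\epsilon)n$, which is well above $\epsilon|A|$ since $d'\gg\epsilon$. The goal is to pick $a_1,\dots,a_h \in X$ and $b_1,\dots,b_{h-1}\in B$ so that all $a_i b_j$ are edges and all $a_i \in N(v)$; then $\{v,a_1,\dots,a_{h-1}\}$ versus $\{a_h, b_1,\dots,b_{h-1}\}$ — wait, more cleanly: $\{v\}\cup\{b_1,\dots,b_{h-1}\}$ is one side and $\{a_1,\dots,a_h\}$ is the other, provided each $b_j$ is adjacent to each $a_i$ and $v$ is adjacent to each $a_i$. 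So I really just need $h$ vertices in $X$ with a common neighborhood of size at least $h-1$ in $B$.

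The key step is to apply the Intersection Property (Lemma~\ref{lemma: intersection property}) — or more conveniently Corollary~\ref{cor: bipartite copies} — to the \emph{restricted} pair $(X, B)$. Since $(A,B)$ is $(\epsilon,d)$-super-regular, it is in particular $\epsilon$-regular with density at least $d$, and $X\subseteq A$ has $|X|\geq d'|A| \geq \epsilon|A|$ (using $d'\gg d \gg \epsilon$), so by the Slicing Lemma (Lemma~\ref{lemma: deterministic slicing}) the pair $(X,B)$ is $\epsilon_1$-regular with density $d_1 \geq d - \epsilon$, where $\epsilon_1 := \max\{\epsilon/d', 2\epsilon\}$. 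Because $\epsilon \ll d \ll d'$, we can ensure $\epsilon_1 < 1/h$ and $d_1 \geq \epsilon_1^{1/(h-1)} + \epsilon_1$, so Corollary~\ref{cor: bipartite copies} (with $k=h$, $k'=h-1$) applies to $(X,B)$ once $|X|,|B|$ are large enough, which holds since $|X|,|B| = \Omega(n)$ and $n\to\infty$. This yields a copy of $K_{h,h-1}$ with the $h$-side $\{a_1,\dots,a_h\}\subseteq X\subseteq N(v)$ and the $(h-1)$-side $\{b_1,\dots,b_{h-1}\}\subseteq B$. Adding $v$ to the side $\{b_1,\dots,b_{h-1}\}$ gives $K_{h,h}$: every $a_i$ is adjacent to every $b_j$ (from the $K_{h,h-1}$) and to $v$ (since $a_i\in N(v)$). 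This is exactly the desired copy: it contains $v$, the $h$ vertices $a_1,\dots,a_h$ in $A$, and the $h-1$ vertices $b_1,\dots,b_{h-1}$ in $B$.

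I expect the only real bookkeeping obstacle to be verifying the chain of inequalities among $\epsilon$, $d$, $d'$ needed to invoke Corollary~\ref{cor: bipartite copies} on $(X,B)$ — in particular confirming that the degradation of the regularity parameter from $\epsilon$ to $\epsilon_1 = \max\{\epsilon/d',2\epsilon\}$ still leaves $\epsilon_1$ small relative to $1/h$ and relative to the density $d-\epsilon$. This is routine given the hierarchy $0<\epsilon\ll d\ll d'$: choosing $\epsilon$ small enough (compared to $d$, $d'$, and $1/h$) makes $\epsilon/d'$ as small as we like, and $d-\epsilon$ stays bounded below by roughly $d$, so the hypotheses $\epsilon_1 < 1/h$ and $d - \epsilon \geq \epsilon_1^{1/(h-1)}+\epsilon_1$ hold. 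The edge case $h=1$ is trivial: then we need a copy of $K_{1,1}$ containing $v$ and one vertex in $A$, and any vertex of $N(v)\cap A$ works since $\deg(v,A)\geq d'|A|>0$. For $h\geq 2$ the argument above goes through verbatim.
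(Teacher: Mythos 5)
Your proof is correct and follows essentially the route the paper intends for this corollary (which it states without an explicit proof): restrict to $N(v)\cap A$, use the Slicing Lemma together with Corollary~\ref{cor: bipartite copies} to find a copy of $K_{h,h-1}$ whose $h$-side lies in $N(v)\cap A$, attach $v$ to the $(h-1)$-side, and treat $h=1$ trivially. The only cosmetic point is that the paper's definition of $(\epsilon,d)$-super-regular gives a one-sided density bound on large subpairs rather than literal $\epsilon$-regularity; since that lower bound is all the slicing/intersection-property steps actually use (and the paper itself treats super-regular pairs this way), this does not affect your argument.
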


\subsection{Szemer\'{e}di's Regularity Lemma}
Finally, we state a version of the Regularity Lemma that can be derived from the original. 
We will make use of a multipartite version of the degree form of the regularity lemma (see Martin, Mycroft, and Skokan~\cite[Theorem 2.8]{martin2017asymptotic}) for the statement of the degree form). We will refer to Lemma~\ref{lemma: multipartite regularity} as ``the Regularity Lemma'' throughout this paper. For this paper, we only use the case that $r=2$. 

\begin{lemma}[Regularity Lemma~\cite{martin2017asymptotic}]\label{lemma: multipartite regularity}
    For every integer $r \geq 2$ and every $\epsilon>0$, there is an $M = M(r,\epsilon)$ such that if $G = (V_1, V_2, \dots, V_r; E)$ is a balanced $r$-partite graph on $rn$ vertices and $d \in [0,1]$ is any real number, then there exists integers $\ell$ and $L$, a spanning subgraph $G' = (V_1, \dots, V_r;E')$ and for each $i=1,\dots,r$ a partition of $V_i$ into clusters $V_{i}^{0}, V_{i}^{1}, \dots, V_{i}^{\ell}$ with the following properties:
    \begin{enumerate}
        \item $\ell \leq M$,
        \item $|V_i^{0}| \leq \epsilon n$ for all $i \in [r]$,
        \item $|V_{i}^{j}| = L \leq \epsilon n$ for $i\in [r]$ and $j \in [\ell]$,
        \item $\deg_{G'}(v,V_{i'}) > \deg_{G}(v,V_{i'}) - (d+\epsilon)n$ for all $v \in V_i$, $i \not = i'$\label{bp: degrees} and 
        \item all pairs $(V_{i}^{j}, V_{i'}^{j'})$ with $i \not = i'$, $j,j' \in [\ell]$ are $\epsilon$-regular with density exceeding $d$ or $0$.    
    \end{enumerate}
\end{lemma}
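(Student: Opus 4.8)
The plan is to deduce Lemma~\ref{lemma: multipartite regularity} from the classical Szemer\'edi Regularity Lemma~\cite{szemreglem}: the statement is the multipartite degree form of~\cite[Theorem 2.8]{martin2017asymptotic}, and the only features one must add on top of the usual single-graph degree form are that the cluster partition should refine the given partition $V_1,\dots,V_r$ and that each class $V_i$ should receive the same number $\ell$ of clusters of a common size. Throughout, $r$ is a fixed constant (only $r=2$ is needed later), $\epsilon_1$ is an auxiliary constant with $0<\epsilon_1\ll\epsilon,1/r$, and I write $o(n)$ for any quantity bounded by $n$ times a function of $\epsilon_1$ and $r$ that tends to $0$ as $\epsilon_1\to0$.

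First I would invoke the version of the Regularity Lemma that accepts a \emph{prepartition} --- itself a routine consequence of the original --- with regularity parameter $\epsilon_1$ and with cluster-count lower bound $m_0:=\lceil r/\epsilon\rceil$, applied to $G$ viewed as a graph on $rn$ vertices with prepartition $\{V_1,\dots,V_r\}$; this returns an $\epsilon_1$-regular equipartition $W_0,W_1,\dots,W_k$ with $m_0\le k\le M_0=M_0(\epsilon_1,m_0)$, $|W_0|\le\epsilon_1 rn$, $|W_j|=L_0\le rn/m_0\le\epsilon n$ for $j\ge1$, and every cluster $W_j$ contained in a single class $V_i$. Then I would carry out the standard passage to the degree form while tracking classes. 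Move into $W_0$ every cluster lying in more than $\sqrt{\epsilon_1}k$ irregular pairs (at most $2\sqrt{\epsilon_1}k$ of these, by double-counting the $\le\epsilon_1 k^2$ irregular pairs); for each $\epsilon_1$-regular pair $(W_a,W_b)$ of density at most $d$, call a vertex of $W_a$ \emph{exceptional} for it if its degree into $W_b$ exceeds $(d+\epsilon_1)L_0$, note that $\epsilon_1$-regularity allows at most $\epsilon_1 L_0$ such vertices per pair, and move into $W_0$ every vertex that is exceptional for more than $\sqrt{\epsilon_1}k$ such pairs (at most $\sqrt{\epsilon_1}L_0$ per cluster, again by double-counting); finally trim each cluster to a common size $L$ with $(1-\sqrt{\epsilon_1})L_0\le L\le L_0$, moving the overflow into $W_0$. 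These moves keep $|W_0|=o(n)$. Now form the spanning subgraph $G'$ by deleting from $G$ every edge that meets $W_0$, lies inside a cluster, lies in an irregular pair, or lies in an $\epsilon_1$-regular pair of density at most $d$. Counting the edges lost at a surviving vertex $v\in V_i$ into a class $V_{i'}\ne V_i$ --- at most $|W_0|$ to $V_{i'}\cap W_0$ and at most $\sqrt{\epsilon_1}k L_0$ to irregular-pair clusters of $V_{i'}$, both $o(n)$, plus at most $(d+\epsilon_1)L_0$ for each of the at most $|V_{i'}|/L$ remaining low-density regular pairs involving $v$'s cluster (for which $v$ is not exceptional) --- gives $\deg_{G'}(v,V_{i'})>\deg_G(v,V_{i'})-(d+o(1))n$; moreover each cross-cluster pair of $G'$ is either $\epsilon_1$-regular of density exceeding $d$ (if untouched) or edgeless (if its edges were deleted), hence in either case $\epsilon$-regular with density exceeding $d$ or $0$.

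It remains to rebalance so that every class carries the same number of clusters. Writing $k_i$ for the number of (trimmed) clusters inside $V_i$ we have $\sum_i k_i=k$ and $|V_i\cap W_0|=|V_i|-k_iL\le|W_0|$, so any two of the $k_i$ differ by at most $|W_0|/L$, and with $\ell:=\min_i k_i$ this gives $k-r\ell=\sum_i(k_i-\ell)\le r|W_0|/L$. For each $i$ I would move $k_i-\ell$ of the clusters of $V_i$ into the exceptional set --- defining $V_i^0$ to be $V_i\cap W_0$ together with those discarded clusters and $V_i^1,\dots,V_i^\ell$ to be the clusters of $V_i$ that remain --- and delete from $G'$ every edge meeting a discarded cluster. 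Now each $V_i$ carries exactly $\ell$ clusters of the common size $L\le\epsilon n$; the number of discarded vertices is $(k-r\ell)L\le r|W_0|=o(n)$, so $|V_i^0|\le(1+r)|W_0|\le\epsilon n$ once $\epsilon_1$ is small; each surviving vertex loses at most $(k_{i'}-\ell)L\le r|W_0|=o(n)$ further edges into any class; and the surviving cross-cluster pairs are unchanged. Since $\epsilon_1$-regularity implies $\epsilon$-regularity and the total degree loss into any class is $(d+o(1))n\le(d+\epsilon)n$ for $\epsilon_1$ small enough, properties (1)--(5) hold with $M:=M_0$.

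The step I expect to be the main obstacle is this rebalancing together with its bookkeeping: one must check that forcing all $r$ classes to carry the same number of clusters discards only a vanishing fraction of each class (so that item (2) survives) and costs each vertex only $o(n)$ further deleted edges (so that item (4) survives), and both reduce to the single observation that the cluster counts $k_i$ are nearly equal because each $|V_i\cap W_0|$ is tiny compared to $|V_i|$. Everything else is a transcription of the standard single-graph degree-form argument, whose one genuine subtlety --- that vertices of anomalously large degree into low-density regular pairs must be swept into the exceptional set --- is indicated above.
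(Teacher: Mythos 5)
The paper gives no proof of Lemma~\ref{lemma: multipartite regularity} at all: it is imported verbatim from Martin, Mycroft, and Skokan~\cite[Theorem 2.8]{martin2017asymptotic}, so there is no in-paper argument to compare against. Your derivation --- the classical regularity lemma applied with a prepartition refining $V_1,\dots,V_r$, the standard passage to the degree form (discarding clusters lying in many irregular pairs, sweeping into the exceptional set the vertices of anomalously high degree into low-density regular pairs, trimming to a common cluster size), followed by a rebalancing step so that every class carries the same number $\ell$ of clusters because the counts $k_i$ differ by at most $|W_0|/L$ --- is the standard route to such multipartite degree forms and is essentially sound; it buys the reader a self-contained proof where the paper buys brevity by citation. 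Two bookkeeping points should be tightened. First, in the per-vertex degree count you must also charge the at most $\sqrt{\epsilon_1}\,k$ low-density pairs for which $v$ \emph{is} exceptional (at most $L_0$ lost edges each, total $o(n)$); this is precisely what your exceptional-vertex sweep was designed for, but the term is missing from your displayed enumeration. Second, the surviving pairs are not literally ``untouched'': the sweep and the trimming remove up to $O\bigl(\sqrt{\epsilon_1}\bigr)L_0$ vertices from each cluster, so by the Slicing Lemma they are only $2\epsilon_1$-regular and their density can slip slightly below $d$, so ``density exceeding $d$'' is not yet guaranteed. The standard fix is to delete edges in regular pairs of density at most $d+C\sqrt{\epsilon_1}$ rather than at most $d$; after trimming the surviving densities still exceed $d$, and the per-vertex degree bound is perturbed only by $o(n)$, which is absorbed into $(d+\epsilon)n$ once $\epsilon_1$ is small.
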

    
After applying Szemer\'{e}di's Regularity Lemma (Lemma~\ref{lemma: multipartite regularity}) to the deterministic graph $G$, we will define the \emph{Szemer\'{e}di graph} $\GSz$ obtained by taking its vertices as the vertex classes $V_i$ of $G$ with edges $\bigl\{V_i,V_j\bigr\}$ whenever $\bigl(V_i, V_j\bigr)$ forms an $\epsilon$-regular pair with density at least $d$. The Szemer\'{e}di graph partially inherits the minimum degree of $G$. Lemma~\ref{lemma: reduced graph degree} makes this statement precise. 

\begin{lemma}\label{lemma: reduced graph degree}
Let  $0 < \epsilon \ll d \ll \alpha$. If $G=(A,B; E)$ is a balanced bipartite graph  on $2n$ vertices with $\delta(G) \geq \alpha n$ then any corresponding balanced Szemer\'{e}di graph, $\GSz$, on $2\ell$ vertices has  $\delta(\GSz) \geq \bigl(\alpha - d - 2\epsilon\bigr) \ell$. 
\end{lemma}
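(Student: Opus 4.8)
The plan is to count, for a fixed vertex $v \in V_i$ in the Szemer\'edi graph, how many clusters $V_{i'}^{j'}$ on the opposite side of the bipartition fail to form an $\epsilon$-regular pair of density exceeding $d$ with $v$'s cluster, and to show this number is small relative to $\ell$. First I would fix $v$ and consider its degree into the opposite part, say $\deg_G(v, B) \geq \alpha n$. Passing to the regularized subgraph $G'$ via property~\eqref{bp: degrees} of Lemma~\ref{lemma: multipartite regularity}, we still have $\deg_{G'}(v, B) \geq (\alpha - d - \epsilon)n$. The edges of $G'$ incident to $v$ land either in the exceptional cluster $V_{i'}^{0}$ (of size at most $\epsilon n$) or in ordinary clusters $V_{i'}^{j'}$ that, by property~(v), are paired with $v$'s cluster in an $\epsilon$-regular pair of density exceeding $d$ (the density-$0$ pairs contribute no $G'$-edges between the two clusters at all).

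The key counting step: each ordinary cluster $V_{i'}^{j'}$ has size exactly $L$, so the number of ordinary clusters into which $v$ sends at least one $G'$-edge is at least
\[
    \frac{\deg_{G'}(v,B) - |V_{i'}^{0}|}{L} \geq \frac{(\alpha - d - \epsilon)n - \epsilon n}{L} = \frac{(\alpha - d - 2\epsilon)n}{L}.
\]
Since $\ell L \leq n$ (the ordinary clusters partition $B \setminus V_{i'}^{0}$, so $\ell L = |B| - |V_{i'}^{0}| \leq n$), we get $n/L \geq \ell$, hence $v$ is adjacent in $\GSz$ to at least $(\alpha - d - 2\epsilon)\ell$ clusters. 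The same argument applies verbatim to $v \in B$ using $\deg_G(v,A) \geq \alpha n$, so $\delta(\GSz) \geq (\alpha - d - 2\epsilon)\ell$ as claimed. One should double-check the edge-case bookkeeping: a cluster $V_{i'}^{j'}$ adjacent to $v$ in $\GSz$ but into which $v$ happens to send no $G'$-edge only helps (it would make the true degree even larger), so the lower bound via counting $G'$-edges is valid; and the fact that $v$'s own cluster lies on the same side as $v$ means it is never a candidate neighbor, which is consistent with the bipartite structure.

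I do not anticipate a serious obstacle here — the only subtlety is being careful that the quantity $\deg_{G'}(v,B)$ (rather than $\deg_G(v,B)$) is what distributes across clusters inheriting $\epsilon$-regularity with density $>d$, which is exactly why the $-d-\epsilon$ term from property~\eqref{bp: degrees} and the $-\epsilon$ term from the exceptional cluster combine into the $-d-2\epsilon$ loss in the statement. The hypothesis $0 < \epsilon \ll d \ll \alpha$ is not really needed for the inequality itself — it merely guarantees the bound is nonvacuous, i.e.\ that $\alpha - d - 2\epsilon > 0$ so $\GSz$ has positive minimum degree.
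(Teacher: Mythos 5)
Your argument is correct and is exactly the standard counting argument this lemma rests on (the paper states the lemma without supplying a proof): pass to $G'$ via property \eqref{bp: degrees} to get $\deg_{G'}(v,B) > (\alpha-d-\epsilon)n$, discard at most $\epsilon n$ edges into the exceptional set, observe that any remaining $G'$-edge into a non-exceptional cluster forces that cluster pair to have density exceeding $d$ (density-$0$ pairs carry no $G'$-edges) and hence to be an edge of $\GSz$, and finish with clusters of size $L$ and $\ell L \leq n$. The only cosmetic slip is the phrase ``$v$ is adjacent in $\GSz$'' where you mean that $v$'s \emph{cluster} is adjacent to those clusters in $\GSz$; since the count of clusters receiving a $G'$-edge from a single vertex $v$ of the cluster is a lower bound on that cluster's $\GSz$-degree (as you note in your bookkeeping remark), the proof stands as written.
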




Given a bounded degree subgraph $Q$ of the Szemer\'{e}di graph (usually spanning), there is a way to slice away a few vertices from each cluster so that for the resulting graph, every pair that was regular is still regular with a relaxed parameter and every pair in $E(Q)$ itself is super-regular.
\begin{lemma}[Super-regularization]\label{lemma: super-regularization}
    Let $0<d\ll 1$ and $\Delta$ and $\epsilon$ be such that $\Delta \cdot \epsilon<1/2$ and $(2\Delta+1)\epsilon<d$. There is an $L_0$ such that for all $L\geq L_0$ the following holds: 
    
    Let $\GSz$ be a Szemer\'edi graph with clusters of size $L$ such that every pair is $\epsilon$-regular with density at least $d$ for pairs in $E(\GSz)$ and with density zero for pairs not in $E(\GSz)$. Let $Q$ be a subgraph of $\GSz$ with maximum degree at most $\Delta$.
    Let $L'=(1-d')L$ be an integer for some $d'$ that satisfies $d'<\Delta \epsilon<d'+(d')^2$. 
    For every $C\in V(\GSz)$ there is a $C'\subset C$ of size exactly $L'=(1-d')L$ such that
    \begin{enumerate}
        \item For every $\bigl(C_1,C_2\bigr)\in E(\GSz)$, the pair $\bigl(C'_1,C'_2\bigr)$ is $2\epsilon$-regular with density at least $d-\epsilon$. \label{it:super-reg:reg}
        \item For every $\bigl(C_1,C_2\bigr)\in E(Q)$, the pair $\bigl(C'_1,C'_2\bigr)$ is $\bigl(2\epsilon,d'\bigr)$-super-regular.\label{it:super-reg:super}
        \end{enumerate}
    \label{lemma:super-regularization}
\end{lemma}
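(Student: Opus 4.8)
The plan is to prove the Super-regularization Lemma by a one-shot random slicing argument: for each cluster $C$, choose $C'\subseteq C$ of size exactly $L'=(1-d')L$ uniformly at random and independently across clusters, and show that whp all the desired properties hold simultaneously. Property \eqref{it:super-reg:reg} is essentially the Slicing Lemma (Corollary~\ref{cor: slicing}): removing $d'L<\Delta\epsilon L<dL$ vertices from each side of an $\epsilon$-regular pair of density $\geq d$ leaves a $\max\{\epsilon/(1-d'),2\epsilon\}=2\epsilon$-regular pair (since $d'<1/2$) of density at least $d-\epsilon$, and this is deterministic — it holds for \emph{every} choice of the $C'$. So the only randomness needed is to handle the minimum-degree (``super'') condition in \eqref{it:super-reg:super}.

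For property \eqref{it:super-reg:super}, the first bullet of super-regularity — that $d_G(X,Y)>d'$ for $X\subseteq C_1'$, $Y\subseteq C_2'$ with $|X|\geq 2\epsilon|C_1'|$, $|Y|\geq 2\epsilon|C_2'|$ — follows from \eqref{it:super-reg:reg} together with regularity: such $X,Y$ have size at least $2\epsilon(1-d')L\geq \epsilon L$, so $d_G(X,Y)>d-\epsilon>d'$ by the hypothesis $(2\Delta+1)\epsilon<d$ (which gives $d-\epsilon>d'$ since $d'<\Delta\epsilon$). Again this is deterministic. The genuinely probabilistic part is the degree condition: for each edge $(C_1,C_2)\in E(Q)$ and each $v\in C_1$ we need $\deg_G(v,C_2')>d'|C_2'|=d'L'$. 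Here I would first discard from each cluster the at most $\epsilon L$ vertices of $C_1$ that have fewer than $(d-\epsilon)L$ neighbors in $C_2$ (for some edge $(C_1,C_2)\in E(Q)$ incident to $C_1$; since $Q$ has max degree $\leq\Delta$, this is at most $\Delta\epsilon L$ bad vertices per cluster, which is why we need $d'\geq\Delta\epsilon$ — wait, we have $d'<\Delta\epsilon$, so instead one keeps these vertices and controls them probabilistically). More carefully: for a fixed $v\in C_1$ with $\deg_G(v,C_2)\geq(d-\epsilon)L$ (true for all but $\leq\epsilon L$ vertices by $\epsilon$-regularity), the quantity $\deg_G(v,C_2')$ is hypergeometric with mean $\deg_G(v,C_2)\cdot L'/L\geq(d-\epsilon)L'$, so by the Chernoff bound \eqref{eq:Chernoff1} with $t=\sqrt{L}\ln L$ we get $\deg_G(v,C_2')\geq(d-\epsilon)L'-\sqrt{L}\ln L>d'L'$ with probability $1-e^{-\Omega((\ln L)^2)}$, using $d-\epsilon>d'$. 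Taking a union bound over the at most $2M$ clusters, $L$ vertices per cluster, and $\Delta$ incident $Q$-edges per cluster — a total of $O(ML\Delta)$ events — this holds whp provided $L\geq L_0$ for $L_0$ depending only on $\epsilon,\Delta,d,M$; and handling the $\leq\epsilon L$ exceptional vertices $v$ requires the extra idea below.

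The main obstacle is precisely those exceptional vertices $v\in C_1$ with $\deg_G(v,C_2)<(d-\epsilon)L$: after slicing they could have almost no neighbours in $C_2'$, violating super-regularity. The standard fix is to swap them out: first delete the (at most $\Delta\epsilon L$) exceptional vertices from each cluster, then take the random slice among the survivors to bring each cluster down to size exactly $L'$. Since $\Delta\epsilon<d'<1/2$, there are enough survivors, and after deleting the bad vertices every remaining $v\in C_1$ has $\deg_G(v,C_2)\geq(d-\epsilon)L\geq(d-\epsilon)\frac{L}{L'}L'$, so the hypergeometric/Chernoff estimate above goes through with mean at least $(d-\epsilon)L'$. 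One must double check the arithmetic that the final regularity parameter is still $2\epsilon$ and the density still at least $d-\epsilon$ after removing $\Delta\epsilon L+d'L\leq 2d'L<L/2$ vertices total from each cluster — this is where the hypotheses $\Delta\epsilon<1/2$ and $(2\Delta+1)\epsilon<d$ and $d'<\Delta\epsilon<d'+(d')^2$ are calibrated to make Corollary~\ref{cor: slicing} and the inequality $d-\epsilon>d'$ both valid. I expect no conceptual difficulty beyond bookkeeping once the ``delete bad vertices, then random-slice the rest'' order of operations is fixed; the role of $Q$ having bounded degree is only to ensure the number of bad vertices per cluster and the number of degree-events per cluster are both $O_\Delta(\epsilon L)$ and $O_\Delta(L)$ respectively.
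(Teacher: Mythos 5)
Your reduction of property \eqref{it:super-reg:reg} and of the density condition in \eqref{it:super-reg:super} to the Slicing Lemma is fine and indeed deterministic. The gap is in your treatment of the exceptional vertices for the degree condition, and it stems from a misreading of the hypotheses: you write ``since $\Delta\epsilon<d'<1/2$, there are enough survivors,'' but the lemma assumes the \emph{opposite} inequality $d'<\Delta\epsilon<d'+(d')^2$ (here $d'$ is essentially $\Delta\epsilon$ rounded \emph{down} so that $(1-d')L$ is an integer). Consequently your ``standard fix'' — delete all vertices having fewer than $(d-\epsilon)L$ neighbours in some $Q$-neighbouring cluster, then randomly slice the survivors down to size exactly $L'=(1-d')L$ — is not feasible in general: regularity only bounds the number of such vertices by $\epsilon L$ per $Q$-neighbour, hence by nearly $\Delta\epsilon L$ per cluster, which may exceed the removal budget $d'L$, leaving fewer than $L'$ survivors to slice from. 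The alternative you float in passing, keeping the exceptional vertices ``and controlling them probabilistically,'' cannot work either: an exceptional vertex may have \emph{no} neighbours at all in the adjacent cluster (regularity permits up to nearly $\epsilon L$ such vertices per pair), and then no random choice of $C_2'$ gives it positive degree, so super-regularity fails deterministically if it is kept. So as written the proposal does not prove the lemma with the stated parameters; the case it handles correctly is the easier regime $\Delta\epsilon\le d'$, which is not the one assumed.

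A correct argument has to use the density lower bound $d$ more carefully when counting the vertices that genuinely must be discarded. If $X\subseteq C_1$ is the set of vertices with at most $\gamma L$ neighbours in $C_2$, padding $X$ to a set of size $\epsilon L$ and applying $\epsilon$-regularity gives $|X|<\epsilon\,\frac{1-d+\epsilon}{1-\gamma}\,L$, which for small thresholds $\gamma$ (of order $d'$, rather than $d-\epsilon$) is roughly $\epsilon(1-d)L$ per $Q$-neighbour — the factor $(1-d)$ is exactly what makes the total fit under the budget $d'L>\bigl(\Delta\epsilon-(d')^2\bigr)L$, and this is where the calibrations $(2\Delta+1)\epsilon<d$ and $\Delta\epsilon<d'+(d')^2$ enter. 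One must then still deal with the ``marginal'' vertices whose degree into $C_2$ lies between roughly $d'L'$ and $2d'L$: these need not be deleted, but their degree into $C_2'$ depends on which vertices are removed from $C_2$, so an additional argument (a trade-off between how many forced removals and how many marginal vertices a single pair can have, or a suitable randomized choice of the non-forced removals with this trade-off made quantitative) is required. None of this bookkeeping appears in the proposal, and it is precisely the content of the lemma under its stated, tight, parameter relations.
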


We will make use of the Blow-up Lemma which allows us to embed a graph $H$ into another graph $G$.
\begin{theorem}[Blow-up Lemma, Koml\'os, S\'ark\"ozy, and Szemer\'edi~\cite{komlosBlowUp}]\label{lemma: blow-up}
    Given a graph $R$ of order $r$ and positive parameters $d, \Delta$, there exists an $\epsilon>0$ such that the following holds: Let $N$ be an arbitrary positive integer, and let us replace the vertices of $R$ with pairwise disjoint $N$-sets $V_1, V_2, \dots, V_r$. We construct two graphs with the same vertex set $V = \cup V_i$. The graph $R(N)$ is obtained by replacing all edges of $R$ with copies of the complete bipartite graph $K_{N,N}$ and a sparser graph $G$ is constructed by replacing the edges of $R$ with some $(\epsilon,d)$-super-regular pairs. If a graph $H$ with maximum degree $\Delta(H) \leq \Delta$ can be embedded into $R(N)$, then it can be embedded into $G$.
\end{theorem}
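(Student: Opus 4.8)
The plan is to reconstruct the original randomized greedy embedding of Koml\'os, S\'ark\"ozy, and Szemer\'edi.

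\emph{Setup.} Since adjoining isolated vertices to $H$ does not increase $\Delta(H)$ and such vertices occupy any leftover slots of $R(N)$, we may assume $H$ spans $V(R(N))=V(G)$. Fix an embedding $\phi\colon H\hookrightarrow R(N)$; it partitions $V(H)$ into classes $X_1,\dots,X_r$ with $\phi(X_i)=V_i$, and it suffices to produce an injection $\psi\colon V(H)\to V(G)$ with $\psi(X_i)\subseteq V_i$ that sends edges to edges, since every edge of $H$ runs between classes $X_i,X_j$ with $ij\in E(R)$, and then $(V_i,V_j)$ is an $(\epsilon,d)$-super-regular pair of $G$ (we take $R$ simple, as in our application; a loop at $i$ is handled by a super-regular pair of $V_i$ with itself). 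Choose $\epsilon$ small compared with $d$, $1/\Delta$, and $1/r$, and fix $\epsilon'$ with $\epsilon\ll\epsilon'\ll d^{\Delta}$. In each $X_i$ select a \emph{buffer} $B_i\subseteq X_i$ of size $\epsilon'N$ so that $\bigcup_i B_i$ is an independent set of $H$ (possible by a greedy choice, since $\Delta(H)\le\Delta$), and set $H^-=H-\bigcup_i B_i$.

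\emph{Phase 1 (random greedy embedding of $H^-$).} Process the vertices of $H^-$ in a fixed order and embed each as it is reached. When embedding $x\in X_i$, its already-embedded $H$-neighbours (at most $\Delta$ of them) have images $w_1,\dots,w_s$, and the candidate set $C(x)$ is the set of still-unused vertices of $V_i$ lying in $N_G(w_1)\cap\dots\cap N_G(w_s)$; embed $x$ at a uniformly random vertex of $C(x)$. Iterating super-regularity — each neighbourhood, restricted to a set of size at least $\epsilon N$, retains a $(d-\epsilon)$-fraction, and at most $N$ vertices are ever deleted from any $V_i$ — gives $|C(x)|\ge (d-\epsilon)^{\Delta}N$ as long as $V_i$ is not almost full; the classes become almost full only at the very end, and there a standard device (ordering the vertices of each class so that those with fewest free candidates are embedded earliest) keeps $C(x)$ nonempty.

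\emph{Phase 2 (placing the buffer).} Since $\bigcup_i B_i$ is independent, every $H$-neighbour of a buffer vertex $b\in B_i$ lies in $H^-$, so after Phase 1 the set $C^{\ast}(b)\subseteq V_i$ of still-free vertices of $V_i$ adjacent to all images of $b$'s neighbours is well-defined. Writing $U_i$ for the set of slots of $V_i$ left unfilled by Phase 1 (so $|U_i|=|B_i|=\epsilon'N$), it remains to choose, for each $i$, a perfect matching of the bipartite ``candidate graph'' $\Gamma_i$ on $B_i\cup U_i$ in which $b\sim v$ iff $v\in C^{\ast}(b)$. The entire purpose of the randomization in Phase 1 is to guarantee that, \whp{} and simultaneously over all $i$, the graph $\Gamma_i$ satisfies Hall's condition: because the set of vertices used in Phase 1 is, \whp{}, spread across each class in a pseudorandom fashion, and super-regular pairs are themselves pseudorandom, the sets $C^{\ast}(b)$ inherit enough expansion (no subset of $B_i$ has a neighbourhood smaller than itself) for a perfect matching to exist. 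Combining these matchings with the Phase 1 embedding yields $\psi$.

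\emph{Main obstacle.} The crux is exactly the probabilistic claim in Phase 2: that the random greedy embedding of $H^-$ does not, by bad luck, consume precisely the free vertices that some buffer vertex needs. The way to prove it is to fix a buffer vertex $b\in B_i$ (and, for the finer estimates, a vertex $v\in V_i$ eligible for it) and track, as Phase 1 runs, the conditional probability that $v$ is still free; this is a Doob-martingale-type quantity whose one-step increments are bounded, because a single greedy choice removes only one vertex and the relevant ``dangerous'' sets overlap in a bounded way, so Azuma's inequality yields an exponentially small failure probability and a union bound over the $O(N)$ buffer vertices closes the argument. Making these martingales genuinely bounded-difference — in particular handling the rare steps where some $C(x)$ threatens to collapse, which in the original proof is dealt with by an explicit local ``repair'' move that re-randomizes part of the process — is the delicate point; everything else is bookkeeping with super-regularity plus one application of Hall's theorem.
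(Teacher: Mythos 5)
The paper does not actually prove this statement: the Blow-up Lemma is quoted verbatim as an external result of Koml\'os, S\'ark\"ozy, and Szemer\'edi and invoked as a black box, so there is no internal proof to compare yours against. Your outline is a faithful summary of the strategy of the original proof (randomized greedy embedding of most of $H$, a reserved buffer finished off by a Hall/K\"onig matching, concentration of the candidate sets), so the architecture is the right one.

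As a proof, however, it has a genuine gap, and it sits exactly where you flag it. The entire content of the Blow-up Lemma is the claim that, with high probability, the candidate graphs $\Gamma_i$ satisfy Hall's condition; your write-up asserts this (``the sets $C^{\ast}(b)$ inherit enough expansion'') and gestures at Azuma's inequality, but the supporting structure is not in place. Concretely: (i) the buffer $B_i$ must be chosen not merely independent but so that the $H$-neighbourhoods of distinct buffer vertices are pairwise disjoint (buffer vertices at mutual distance at least $4$ in $H$); otherwise the events ``$v$ survives as a candidate for $b$'' and ``$v'$ survives as a candidate for $b'$'' are too correlated for the concentration bounds to close. (ii) Hall's condition for $\Gamma_i$ requires both that each $b\in B_i$ retains $\Omega(|U_i|)$ free candidates and that each free slot $v\in U_i$ remains a candidate for $\Omega(|B_i|)$ buffer vertices, together with an expansion estimate for middle-sized subsets of $B_i$ coming from $\epsilon$-regularity; your sketch addresses only the first of these. (iii) The martingale increments are not bounded in the naive sense --- one greedy choice can simultaneously shrink the candidate sets of up to $\Delta$ unembedded vertices and of linearly many buffer vertices --- and the original argument spends most of its length on the bookkeeping (the queue of ``critical'' vertices whose candidate sets have become dangerously small and must be embedded out of order) needed to make the exposure argument valid; calling this ``a standard device'' and ``an explicit local repair move'' does not discharge it. Since the lemma is used in the paper purely as a citation, none of this needs to be reproved there; but as a standalone proof your proposal establishes the skeleton without the load-bearing estimates.
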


\section{Random multipartite graphs}\label{sec: random graphs}

Theorem~\ref{thm: covering} implies that the polylog factor in the threshold for a perfect bipartite tiling is necessary as it is in the general random graph case in Johansson, Kahn, and Vu~\cite{JKV}. However, in Theorem~\ref{thm:threshold} we prove that, in the perturbed case the threshold does not have this polylog factor. To that end, we state a special case for the bipartite setting.

\begin{theorem}[Gerke and McDowell~\cite{GerkeMcDowell}]\label{thm: covering} 
   \[\lim_{n\to\infty } \mathbb{P}(G_{n,n,p} \text{ contains a perfect $K_{h,h}$-tiling}) =\begin{cases} 
      1, & \text{if } p =\omega\bigl((\log n)^{1/h^2}n^{-(2h-1)/h^2}\bigr); \\
      0,  & \text{if } p =o\bigl((\log n)^{1/h^2}n^{-(2h-1)/h^2}\bigr).
   \end{cases}
\]
\end{theorem}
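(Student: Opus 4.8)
The plan is to prove the two directions separately. We note first that, for the purposes of this paper, only the $0$-statement is actually used: Theorem~\ref{thm: covering} is quoted to certify that the polylog factor is genuinely present when one starts from the empty graph. Accordingly we treat the $0$-statement in detail and only sketch the $1$-statement, which is the substance of the Gerke--McDowell result.

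\emph{The $0$-statement.} Assume $p = o\bigl((\log n)^{1/h^2} n^{-(2h-1)/h^2}\bigr)$, equivalently $\mu := n^{2h-1} p^{h^2} = o(\log n)$. It suffices to produce, \whp, a vertex of $A$ lying in no copy of $K_{h,h}$ of $G_{n,n,p}$, since then there is no perfect tiling. Fix $a_0 \in A$ and let $X$ count the copies of $K_{h,h}$ through $a_0$; then $\E[X] = \binom{n-1}{h-1}\binom{n}{h}p^{h^2} = \Theta(\mu) = o(\log n)$. Since the events ``a given $K_{h,h}$ through $a_0$ is present in $G_{n,n,p}$'' are increasing in the edge set, their complements are positively correlated, so by the FKG inequality $\prob(X = 0) \ge \prod_K (1 - p^{h^2}) \ge \exp\bigl(-(1+o(1))\E[X]\bigr) = n^{-o(1)}$ (using $p \to 0$). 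Letting $Z$ be the number of vertices of $A$ in no copy of $K_{h,h}$, this gives $\E[Z] = n\,\prob(X=0) = n^{1-o(1)} \to \infty$. To finish one applies the second-moment method to $Z$: writing $q := \prob(X=0)$, one has $\E[Z^2] = \E[Z] + \sum_{a \neq a'} \prob(a, a' \text{ both uncovered})$, and the bound $\prob(a, a' \text{ both uncovered}) \le (1+o(1))q^2$ for distinct $a, a' \in A$ is obtained by exposing the edges touching $a$ and $a'$, discarding the lower-order copies through $a$ that also pass through $a'$, and re-running the FKG estimate on the two now essentially independent copy-families; this yields $\E[Z^2] \le (1+o(1))\E[Z]^2$, whence $Z > 0$ \whp\ by Chebyshev's inequality.

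\emph{The $1$-statement.} Assume now $p \ge \omega\bigl((\log n)^{1/h^2} n^{-(2h-1)/h^2}\bigr)$. The plan is to follow the method of Johansson, Kahn, and Vu~\cite{JKV}, transplanted to the bipartite host. First, build the $2h$-uniform hypergraph $\mathcal H$ on vertex set $A \sqcup B$ whose hyperedges are the vertex sets of the copies of $K_{h,h}$ in $G_{n,n,p}$ --- that is, $h$ vertices in $A$, $h$ in $B$, with all $h^2$ cross-edges present --- so that a perfect $K_{h,h}$-tiling of $G_{n,n,p}$ is exactly a perfect matching of $\mathcal H$. Second, verify that, \whp, $\mathcal H$ meets the pseudorandomness hypotheses of the JKV perfect-matching theorem: every vertex lies in $\Theta(n^{2h-1}p^{h^2}) = \omega(\log n)$ hyperedges, and the co-degrees of all bounded-size vertex subsets concentrate around their binomial-model expectations --- this is a union bound over the ``bad'' configurations together with Chernoff-type tails (Lemma~\ref{lemma: chernoff}), and it is here that one uses that $K_{h,h}$ is strictly $1$-balanced, so the per-vertex count $n^{2h-1}p^{h^2}$ dominates the counts attached to larger configurations and no subconfiguration raises the threshold. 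Third, feed this pseudorandom $\mathcal H$ into the entropy/random-greedy argument of~\cite{JKV} to produce a perfect matching of $\mathcal H$ \whp.

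The main obstacle lies entirely in the $1$-statement: checking the co-degree conditions for $\mathcal H$ in the bipartite model and then importing the JKV entropy machinery, whose hypotheses are stated for $G_{N,p}$ and must be re-established in the bipartite setting. As a cheaper but weaker alternative, the correct \emph{polynomial} order of the threshold follows from the spread lemma --- the uniform distribution on perfect $K_{h,h}$-tilings of $K_{n,n}$ is $\Theta\bigl(n^{-(2h-1)/h^2}\bigr)$-spread, the extremal configuration being a single block of the tiling --- but this loses a factor of $(\log n)^{1 - 1/h^2}$ and hence does not by itself yield the sharp threshold stated above.
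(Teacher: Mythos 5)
The paper does not prove this statement at all: Theorem~\ref{thm: covering} is imported verbatim from Gerke and McDowell~\cite{GerkeMcDowell} (a bipartite specialization of their result, itself built on the Johansson--Kahn--Vu machinery), so the honest treatment here is a citation, not a proof. Measured against that, your proposal does not close the gap. Your $1$-statement is only a plan --- ``transplant JKV to the bipartite host, verify co-degree pseudorandomness, feed into the entropy argument'' --- and that transplantation \emph{is} the substance of the cited theorem; nothing in your sketch establishes it, and your own closing paragraph concedes that the spread-lemma shortcut loses the factor $(\log n)^{1-1/h^2}$ and therefore does not give the stated sharp threshold. So the upper-bound half of the theorem is not proved by the proposal.

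The $0$-statement sketch is the standard route (exhibit a vertex of $A$ in no copy of $K_{h,h}$), and the first-moment step is fine: Harris/FKG does give $\prob(X=0)\ge\prod_K(1-p^{h^2})=\exp\bigl(-(1+o(1))\E[X]\bigr)=n^{-o(1)}$, hence $\E[Z]\to\infty$. But the second-moment step has a genuine gap. For distinct $a,a'\in A$ the events ``$a$ uncovered'' and ``$a'$ uncovered'' are both \emph{decreasing}, so FKG/Harris yields $\prob(\text{both})\ge q^2$ --- the opposite direction from the bound $\prob(\text{both})\le(1+o(1))q^2$ you need, and ``re-running the FKG estimate on the two now essentially independent copy-families'' cannot produce it. Moreover, discarding the few copies through both $a$ and $a'$ does not make the remaining families independent: copies through $a$ and copies through $a'$ still share edges of $G_{n,n,p}$ not incident to either vertex, and controlling that correlation (via a careful conditional computation or Janson-type estimates on the overlap parameter) is precisely the work the sketch omits. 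As written, the variance bound, and with it the whp conclusion $Z>0$, is asserted rather than proved.
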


Lemma~\ref{lemma: partial covering} below will be useful for finding many copies of $K_{h,h}$ in sufficiently large, dense subgraphs. The following theorem is proved along the same lines as \cite[Theorem~4.9]{JLR} and was proved originally by Ruci\'nski~\cite{RucinskiExtension} in a far more general setting in which the graph to be tiled need not be $K_{h,h}$ but can be any ``strictly balanced" graph and $G_{n,n,p}$ is replaced by $G_{n,p}$.

\begin{lemma}[Partial bipartite tiling]\label{lemma: partial covering}
Let $\epsilon >0$, and $h\geq 1$ be a positive integer. Let $F(\epsilon,h)$ be the property that a graph contains a $K_{h,h}$-tiling covering all but at most $\epsilon n$ vertices. There exist $c=c(\epsilon,h)$ and $C=C(\epsilon,h)$ such that
\[\lim_{n\to\infty } \mathbb{P}\bigl(G_{n,n,p} \in F(\epsilon,h)\bigr) =\begin{cases} 
    1,  & \text{if }p \geq C n^{-(2h-1)/h^2};\\
      0& \text{if } p \leq cn^{-(2h-1)/h^2}.
   \end{cases}
\]
\end{lemma}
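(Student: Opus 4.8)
\textbf{Proof proposal for Lemma~\ref{lemma: partial covering}.}

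The plan is to treat the two directions separately, since they are of quite different character. For the $0$-statement (the case $p\le cn^{-(2h-1)/h^2}$), the idea is a first-moment/deletion argument. Let $X$ count the number of copies of $K_{h,h}$ in $G_{n,n,p}$. Each potential copy uses $h$ vertices on each side and $h^2$ edges, so $\E[X] = \binom{n}{h}^2 p^{h^2} = \Theta\bigl(n^{2h} p^{h^2}\bigr)$. When $p \le c n^{-(2h-1)/h^2}$ we get $p^{h^2} \le c^{h^2} n^{-(2h-1)}$, hence $\E[X] = O\bigl(c^{h^2} n\bigr)$. If a $K_{h,h}$-tiling covers all but at most $\epsilon n$ vertices, it contains at least $(1-\epsilon)n/h$ disjoint copies of $K_{h,h}$, so in particular $X \ge (1-\epsilon)n/h$. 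Choosing $c$ small enough that $\E[X] < \tfrac12 (1-\epsilon)n/h$ for large $n$, Markov's inequality (Lemma~\ref{lemma: markov}) gives $\prob\bigl(X \ge (1-\epsilon)n/h\bigr) \le \E[X]\big/\bigl((1-\epsilon)n/h\bigr) < \tfrac12$ — but we need this probability to go to $0$, not just stay below $\tfrac12$. The fix is the standard one: take $c$ small enough that $\E[X] = o(n)$, e.g. $\E[X] \le n/(\log n)$, which we can arrange since decreasing $c$ by a constant factor multiplies $\E[X]$ by a factor $c^{h^2} \to 0$; more carefully, for any fixed target we may instead note $\E[X]\le K c^{h^2} n$ for an absolute constant $K$, and pick $c$ with $K c^{h^2} < (1-\epsilon)/(2h)$ won't suffice for whp, so instead we argue: the expected number of copies is $\le Kc^{h^2}n$, and by Markov the probability that $X \ge (1-\epsilon)n/h$ is at most $\frac{Kc^{h^2}n}{(1-\epsilon)n/h} = \frac{K h c^{h^2}}{1-\epsilon}$, a constant $<1$ once $c$ is small; to upgrade to $\to 0$ one restricts to $p = cn^{-(2h-1)/h^2}$ exactly and uses that this constant can be made arbitrarily small, or appeals to the fact that $F(\epsilon,h)$ is a monotone property and invokes a $0$--$1$ law. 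In fact the cleanest route is: $F(\epsilon,h)$ is an increasing property, so it has a threshold; we have just shown $p = cn^{-(2h-1)/h^2}$ lies below it for $c$ small (probability bounded away from $1$ suffices to conclude the sharper statement fails, and monotonicity plus Bollob\'as--Thomason pins it down). I will phrase the $0$-statement as: for $c$ sufficiently small, $\E[X] \to 0$ is false, but $\E[X]/n \to 0$, whence whp $X < (1-\epsilon)n/h$ and no such near-perfect tiling exists.

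For the $1$-statement ($p \ge Cn^{-(2h-1)/h^2}$), the plan is to follow the greedy/absorption-free covering argument in the style of \cite[Theorem~4.9]{JLR} and Ruci\'nski~\cite{RucinskiExtension}. Since $K_{h,h}$ is strictly balanced — its $1$-density $m_1(K_{h,h}) = \frac{e_H}{v_H - 1} = \frac{h^2}{2h-1}$ is strictly larger than that of any proper subgraph — the second moment method shows that at $p = Cn^{-(2h-1)/h^2} = Cn^{-1/m_1(K_{h,h})}$ the number of copies of $K_{h,h}$ through any fixed vertex is concentrated, and moreover copies are ``spread out''. The standard argument: build the tiling greedily, and at each step with at least $\epsilon n$ uncovered vertices on each side remaining, use a second-moment argument on the random subgraph induced on the uncovered vertices (still of linear size, still with edge probability $p$) to show it whp contains a copy of $K_{h,h}$; alternatively, and more robustly, fix a subset of uncovered vertices and show that whp it contains many copies, so the process cannot get stuck before coverage drops below $\epsilon n$. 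The technical heart is controlling the variance: one computes $\E[X] = \Theta(n^{2h} p^{h^2}) = \Theta(C^{h^2} n)$ and shows $\var(X) = o(\E[X]^2)$ using strict balancedness to bound the contributions from pairs of copies sharing $j$ vertices on each side — the dominant cross-terms correspond to $j=0$ and the overlap terms are lower order precisely because $m_1$ is maximized by the whole graph. This is exactly the Ruci\'nski-type extension statement, so I would cite \cite[Theorem~4.9]{JLR} and \cite{RucinskiExtension} and indicate that the bipartite version follows by the identical computation with $G_{n,n,p}$ in place of $G_{n,p}$ (the relevant parameter $m_1(K_{h,h})$ is unchanged, and all $\binom{n}{h}^2$-type counts only change by constant factors).

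The main obstacle — or rather, the step requiring the most care — is the $1$-statement's guarantee that the greedy process does not get stuck, i.e. that whp \emph{every} sufficiently large induced subgraph (say on $\ge \epsilon n$ vertices per side) of $G_{n,n,p}$ contains a copy of $K_{h,h}$. A pointwise second-moment bound gives that a \emph{fixed} large subgraph contains a copy with probability $1 - o(1)$, but there are exponentially many subgraphs to union-bound over, so one needs a quantitatively stronger statement: that the \emph{number} of copies in any such subgraph is at least, say, $n^{\Omega(1)}$ with probability $1 - e^{-\omega(n)}$, which requires a concentration inequality (Janson's inequality or a martingale bound) rather than Chebyshev. I would handle this by invoking the version of this covering result already packaged in the literature — this is precisely the content of Ruci\'nski's extension theorem and the treatment in \cite{JLR} — rather than reproving the concentration from scratch. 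If a self-contained proof were desired, the route is: use Janson's inequality to show $\prob(\text{no }K_{h,h}\text{ in fixed }\epsilon n\text{-set}) \le \exp(-\Omega(n))$, the exponent beating the $\binom{n}{\epsilon n} \le 2^{O(n)}$ union bound provided $C$ is chosen large enough, then a single greedy sweep removes $h$ vertices per side at a time until fewer than $\epsilon n$ per side remain, yielding the required near-perfect tiling. The divisibility/parity bookkeeping (ensuring we stop with an integral number of copies and at most $\epsilon n$ uncovered) is routine and absorbed into the constant.
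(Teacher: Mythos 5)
Your $1$-statement sketch is fine and is exactly the standard route: Janson's inequality to show that a \emph{fixed} pair of linear-sized subsets fails to contain a copy of $K_{h,h}$ with probability $e^{-\Omega(n)}$ (using strict $1$-balancedness of $K_{h,h}$ to control the overlap term), a union bound over the at most $4^n$ choices of subsets with $C$ large, and then a greedy sweep. This is also essentially what the paper does, except that the paper does not reprove anything: it simply states that the lemma follows as in Janson--{\L}uczak--Ruci\'nski, Theorem~4.9, and Ruci\'nski's original extension theorem, with $G_{n,n,p}$ in place of $G_{n,p}$, which is the fallback you also propose.

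The genuine gap is in the $0$-statement you finally commit to. At the relevant scale $p=cn^{-(2h-1)/h^2}$ you have $\E[X]=\Theta\bigl(c^{h^2}n\bigr)$ for the number $X$ of copies of $K_{h,h}$, so your closing claim ``$\E[X]/n\to 0$, whence whp $X<(1-\epsilon)n/h$'' is false: $\E[X]/n$ tends to a positive constant, and Markov's inequality only yields $\prob\bigl(X\ge (2-\epsilon)n/(2h)\bigr)\le O\bigl(c^{h^2}\bigr)$, a constant bounded away from $0$ and $1$, not $o(1)$. Neither of your proposed rescues works: making $c$ smaller only shrinks that constant, and the Bollob\'as--Thomason threshold theorem cannot upgrade ``probability bounded away from $1$ at $cn^{-(2h-1)/h^2}$'' to ``probability $\to 0$ at $c'n^{-(2h-1)/h^2}$,'' because the threshold here is coarse (that is exactly what the lemma asserts), so the behaviour at two probabilities of the same order is not determined by the existence of a threshold. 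The correct fix is a second-moment upper bound rather than Markov: since $K_{h,h}$ is strictly balanced with respect to $m_1$, every overlap class of pairs of copies contributes $O(n)$ to $\var(X)$ at this $p$, hence $\var(X)=o(\E[X]^2)$ and by Chebyshev $X\le 2\E[X]=O\bigl(c^{h^2}n\bigr)$ whp; choosing $c$ small enough that $2\E[X]<(2-\epsilon)n/(2h)$ (the number of disjoint copies needed to cover all but $\epsilon n$ of the $2n$ vertices --- note your count $(1-\epsilon)n/h$ is off by an inessential factor in $\epsilon$) then gives the $0$-statement, with monotonicity (your correct observation) handling all smaller $p$. With that replacement the proposal is sound; as written, the stated deduction fails.
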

Note that by Lemma~\ref{lemma: partial covering}, in order to get only a partial $K_{h,h}$-tiling we don't need the polylog factor.

\section{Extremal example}\label{sec: example}
We prove Theorem~\ref{thm: example}. Let $h\geq 1$ be a positive integer. Let $\alpha + \beta =1$, with $0<\alpha < 1/(2h)$ and $\beta > (2h-1)\alpha$. Then, let $G_{n;\alpha}^{*}$ have vertex classes $A = A_1 \sqcup A_2$ and $B =B_1 \sqcup B_2$ with $\bigl|A_1\bigr| = \bigl|B_1\bigr| = \alpha n$ and $\bigl|A_2\bigr| = \bigl|B_2\bigr| = \beta n$. The graph $G_{n;\alpha}^{*}$ has all edges in each of the pairs $\bigl(A_1, B_1\bigr)$, $\bigl(A_1, B_2\bigr)$, $\bigl(A_2,B_1\bigr)$ and no edges in the pair $\bigl(A_2, B_2\bigr)$. 
For a contradiction, suppose that $G_{n;\alpha}^{*}\cup G_{n,n,p}$ contains a perfect $K_{h,h}$-tiling. 
The number of copies of $K_{h,h}$ using at least one vertex in $A_1 \cup B_1$ is at most $2\alpha n$.

Therefore, this tiling covers at most $2\alpha n(2h-1)$ vertices in $A_2\cup B_2$. 
Note that $2\beta n-2\alpha n(2h-1)=2n(1-2h\alpha)>0$ as long as $\alpha<1/(2h)$. 
However, applying Lemma~\ref{lemma: partial covering} to $G_{n,n,p}[A_2 \cup B_2] \cong G_{\beta n, \beta n, p}$, with $\epsilon = \frac{\alpha (2h-1)n}{\beta n} = \frac{\alpha (2h-1)}{1-\alpha}$ gives that such a partial $K_{h,h}$-tiling does not exist \whp~for $p=O\bigl(n^{-(2h-1)/h^2}\bigr)$.

\section{Proof Theorem~\ref{thm: main}}\label{sec: proof main}

Let $h\geq 2$. We begin with the usual sequence of constants:
\begin{align*}
0<\eta \ll \epsilon \ll \epsilon_1 \ll \epsilon_2\ll d \ll d' \ll \alpha < 1/(2h)   
\end{align*}
and let $t=\lceil 2\alpha^{-1} \rceil$. We apply the bipartite version of the degree form of the Regularity Lemma (Lemma~\ref{lemma: multipartite regularity}) to  $G = G_{n;\alpha}$ with $\ell_0 = \epsilon^{-1}$. The spanning subgraph $G'$ of $G$ obtained from the Regularity Lemma respects the original bipartition of $G$ and so we denote the clusters belonging to $A$ and $B$ by $V_i^{A}$ and $V_{j}^{B}$ respectively, for $i,j=0,1,\dots, \ell$, where $\ell_0\leq\ell\leq M$. Let $\mathcal{A}$ denote the set of clusters in $A$ (excluding the leftover set, $V_{0}^{A}$). Similarly, let $\mathcal{B}$ denote the set of clusters in $B$ (again, excluding the leftover set, $V_{0}^{B}$). We shall refer to the vertices of the Szemer\'{e}di graph, $\GSz$, as clusters.

By Lemma~\ref{lemma: reduced graph degree}, $\delta(\GSz) \geq (\alpha - d/2 -3\epsilon)\ell$. Since
\begin{align*}
    \chicr(K_{1,t}) = \cfrac{(2-1)(t+1)}{t+1 -1} = \cfrac{t+1}{t}  ,
\end{align*}
our choice of $t$ gives $(\alpha - d/2 - 3\epsilon)\ell \geq (\alpha/2) \ell \geq \ell/(t+1) $. From Corollary~\ref{cor:bipartite stars}, we find a partial $K_{1,t}$-tiling $\mathcal{K}$ of $\GSz$ covering all but at most $c_2< 8(t+1)^2\frac{t}{t-1}$ clusters. We discard these at most $c_2$ clusters by adding them to the respective leftover set. Consequently, since $t\geq 2$, the number of stars centered at $\mathcal{A}$ is equal to the number of stars with centers in $\mathcal{B}$. For ease of notation, let $\ell$ be redefined to denote the number of clusters remaining.


We follow the notation in \cite{balogh2019tilings}. For an arbitrary ordering of the stars in the $K_{1,h}$-tiling, $j=1,2,\ldots,k=2\ell/(t+1)$ and, given such a $j$, an ordering of the clusters of the star, $i=0,1,\ldots,t$ (where the $0^{\rm th}$ cluster is the center of the star), let $V_{i,j}^{X}$ denote the $(j,i)^{\rm th}$ cluster where $X \in \{A,B\}$ designates whether the cluster is in $A$ or $B$. We will leave off the superscript when the context is clear or we mean to refer to an arbitrary part of the bipartition. That is, $V_{0,j}$ denotes the center of the $j^{\rm th}$ star and we say that this star is \textit{centered at $V_{0,j}$}.

We have that the number of leftover vertices is now at most
\begin{align*}
    \left|V_0\right| \leq  \epsilon n + c_2 L\leq (c_2 + 1) \epsilon n < 9 \biggl(\frac{(t+1)^2t}{t-1}\biggr)\epsilon n.
\end{align*}

Next, we move exactly $\lfloor t \epsilon L\rfloor$ vertices from each $V_{i,j}$ to the leftover set in order to get that, by Lemma~\ref{lemma: super-regularization}, the resulting cluster pairs $(V_{0,j} , V_{i,j})$ are $(2\epsilon,d/2)$-super-regular for each $j\in [k]$ and all $i\in [t]$. 
After moving these vertices to the leftover set, the size of each leftover set can be bounded above:
\begin{align*}
    \left|V_0\right| &\leq 9 \Bigl(\frac{(t+1)^2t}{t-1}\Bigr)\epsilon n + \ell(t \epsilon L) \\
    &\leq 45t^2(\epsilon n)\\
    &\leq \frac{180\epsilon}{\alpha^2} n .
\end{align*}

 Our strategy for tiling the leftover vertices is to pair each leftover $v\in V_0^A\cup V_0^B$ with a cluster $V_{i,j} \subseteq \mathcal{S} \in \mathcal{K}$ and to use the vertices in that cluster to form a copy of $K_{1,h}$ with $v$, then we choose the remaining $h-1$ vertices from $V_{i',j'}$, where $(V_{i,j}, V_{i',j'})$ are $(2\epsilon,d/2)$-super-regular. 
 In order to make sure that we don't use a cluster too many times we seek an assignment of the vertices of $V_{0}^{A}$ to the clusters of $\mathcal{B}$. A similar argument will hold for the vertices of $V_{0}^{B}$ assigned to the clusters of $\mathcal{A}$. 
 For ease of notation, let $L$ be redefined to denote the number of vertices remaining in each cluster.
 
 Let $J$ be an auxiliary bipartite graph with one side being the vertices of $V_{0}^{A}$ and the other being $\mathcal{B}$. 
 We include the edge $\{v,V_{i}^{B}\}$ whenever $\deg_{G'}(v,V_{i}^{B})~\geq d'L$. 
 
 We claim that $J=(V_{0}^{A}, \mathcal{B})$ has 
 \begin{align}
     \deg_{J}(v) \geq (\alpha -3d') \ell \qquad \mbox{for each $v \in V_{0}^{A}$}.
     \label{eq:Qdeg}
    \end{align}
 Suppose this does not occur for some $v$, then
 \begin{align*}
     \deg_{G'}(v) &\leq \sum_{V_{i,j}\in\mathcal{B}}\deg_{V_{i,j}}(v) + \bigl|V_0^B\bigr| \\
     &\leq (\alpha-3d')\ell L + \ell d' L + \bigl|V_0^B\bigr| \\
     &\leq (\alpha-2d')n + \frac{180\epsilon}{\alpha^2}n \\
     &\leq (\alpha-d')n .
 \end{align*}

On the other hand, from the Regularity Lemma (Lemma (\ref{lemma: multipartite regularity})(iv)),
 \begin{align*}
     \deg_{G'}(v) &\geq \deg_{G}(v) - (d+\epsilon)n \\
     &\geq \alpha n - (d+\epsilon)n \\
     & \geq (\alpha - 2d)n,
 \end{align*}
 which is a contradiction because $d\ll d'$. 

 We use a greedy algorithm to assign each of the leftover vertices to a cluster while making sure that no cluster is assigned to too many vertices.


\begin{fact}\label{claim:bipartite assignment}
    There exists an assignment of the vertices of $V_0$ to the clusters in $\GSz$ such that each cluster is assigned to at most $|V_{0}^{A}|/ (\alpha - 3d')\ell$ many vertices.
\end{fact}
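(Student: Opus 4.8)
The plan is to obtain the assignment by a standard application of Hall's theorem to a suitable auxiliary bipartite graph, combined with the degree bound~\eqref{eq:Qdeg}. I will describe the argument for the vertices of $V_0^A$ being assigned to clusters in $\mathcal{B}$; the argument for $V_0^B$ assigned to $\mathcal{A}$ is identical, and the two together give the assignment of all of $V_0$.

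First I would form a bipartite graph $J^{*}$ from $J$ by replacing each cluster $V_i^B \in \mathcal{B}$ with $q := \lceil |V_0^A|/((\alpha-3d')\ell) \rceil$ copies of itself, each copy joined to exactly the neighbors of $V_i^B$ in $J$. A system of distinct representatives in $J^{*}$ — i.e. a matching of $J^{*}$ that saturates the side $V_0^A$ — corresponds exactly to an assignment of each vertex of $V_0^A$ to a cluster of $\mathcal{B}$ in which no cluster is used more than $q$ times, which is what we want. So it suffices to verify Hall's condition for the $V_0^A$-side of $J^{*}$. For a set $S \subseteq V_0^A$, the neighborhood $N_{J^{*}}(S)$ has size $q \cdot |N_J(S)|$. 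If $|S| \le (\alpha-3d')\ell$ then, picking any single $v \in S$, we have $|N_{J^{*}}(S)| \ge q \cdot \deg_J(v) \ge q(\alpha-3d')\ell \ge |V_0^A| \ge |S|$ by~\eqref{eq:Qdeg}. If instead $|S| > (\alpha-3d')\ell$, then since $|V_0^A| \le \frac{180\epsilon}{\alpha^2}n$ while $(\alpha-3d')\ell$ is a positive proportion of $\ell$ (hence, after accounting for $L$, a positive proportion of $n$), our constant hierarchy $\epsilon \ll d' \ll \alpha$ forces $|V_0^A| \le (\alpha-3d')\ell$, so this case is vacuous; in other words $q = 1$ and $|N_{J^{*}}(S)| = |N_J(S)| \ge (\alpha-3d')\ell \ge |V_0^A| \ge |S|$ again. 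Either way Hall's condition holds, so the desired matching, and hence the desired assignment, exists.

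The only genuine content beyond bookkeeping is checking the two regimes of Hall's condition, and there the main point to get right is the comparison of $|V_0|$ with $(\alpha - 3d')\ell$: we need $\epsilon$ chosen small enough relative to $\alpha$ (and to $d'$) that the leftover set, of size at most $\tfrac{180\epsilon}{\alpha^2}n$, is smaller than $(\alpha-3d')\ell L$, which holds since $\ell L \ge (1-\epsilon)n$ and $\alpha - 3d' > \alpha/2$. This is exactly guaranteed by the hierarchy $0<\eta \ll \epsilon \ll \epsilon_1 \ll \epsilon_2 \ll d \ll d' \ll \alpha$ fixed at the start of the section, so no new constraint on the constants is introduced. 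I expect the main (and only real) obstacle to be phrasing the duplication-of-clusters trick cleanly so that the cap of $q = \lceil |V_0^A|/((\alpha-3d')\ell)\rceil$ per cluster comes out exactly as stated; everything else is a direct consequence of~\eqref{eq:Qdeg} and Hall's theorem.
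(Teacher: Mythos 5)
Your overall route---building $J^{*}$ by taking $q=\lceil |V_0^A|/((\alpha-3d')\ell)\rceil$ copies of each cluster of $\mathcal{B}$ and extracting a matching saturating $V_0^A$ via Hall's theorem---is sound, and it differs from the paper, which obtains the assignment greedily: each leftover vertex is sent to a currently least-loaded $J$-neighbor, and since every $v\in V_0^A$ has at least $(\alpha-3d')\ell$ neighbors by \eqref{eq:Qdeg}, no cluster's load exceeds the stated bound (up to rounding). Both arguments use nothing beyond \eqref{eq:Qdeg}; the greedy version is shorter, while in your formulation Hall's condition actually holds for trivial reasons: for any nonempty $S\subseteq V_0^A$ and any single $v\in S$ one has $|N_{J^{*}}(S)|\ge q\deg_J(v)\ge q(\alpha-3d')\ell\ge |V_0^A|\ge |S|$, so no case distinction on $|S|$ is needed at all.

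That said, the second case of your case analysis contains a genuine error, albeit a harmless one: you assert $|V_0^A|\le(\alpha-3d')\ell$ and hence $q=1$, on the grounds that $(\alpha-3d')\ell$ is ``after accounting for $L$, a positive proportion of $n$''. This conflates the number of clusters with the number of vertices. Here $\ell\le M$ is a constant depending only on $\epsilon$, so $(\alpha-3d')\ell$ is a constant, whereas $|V_0^A|$ may be as large as $\frac{180\epsilon}{\alpha^2}n$; consequently $q$ is typically of order $\epsilon L$, not $1$, and sets $S$ with $|S|>(\alpha-3d')\ell$ certainly exist. The quantity that is a positive proportion of $n$ is $(\alpha-3d')\ell L$, and that comparison is what makes the resulting per-cluster load at most roughly $(\epsilon_1/h)L$, i.e.\ small relative to $L$ as needed later---but it does not make your second case vacuous. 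Deleting the case split and keeping the single-vertex neighborhood bound repairs the write-up completely. A final pedantic remark: from \eqref{eq:Qdeg} alone one can only guarantee a cap of $\lceil |V_0^A|/((\alpha-3d')\ell)\rceil$ per cluster (consider all leftover vertices sharing the same $(\alpha-3d')\ell$ neighbors), so the Fact's bound should be read with this rounding; the slack of less than one vertex per cluster is irrelevant to how the Fact is applied.
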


 By Fact~\ref{claim:bipartite assignment}, we have that, the number of vertices from $V_{0}^{A}$ that are assigned to a cluster from $\mathcal{B}$ is at most 
 
 \begin{align*}
    \frac{|V_{0}^{A}|}{(\alpha - 3d')\ell} 
        = \frac{|V_{0}^{A}|}{\alpha - 3d'}\cdot \frac{L}{n-\bigl|V_{0}^{B}\bigr|}
     \leq \frac{180\epsilon/\alpha^2}{(\alpha - 3d')(1-180\epsilon/\alpha^2)n} L
     \leq \frac{\epsilon_1}{h}L .
 \end{align*}
 
 We now use Corollary~\ref{cor: (v,A,B)}, in order to find vertex-disjoint copies of $K_{h,h}$, one for each leftover vertex $v$. That is, take $v\in V_{0}^{A}$ and find $(V_{i,j}^{B}, V_{i',j}^{A})$ such that $V_{i,j}^{B}$ and $V_{i',j}^{A}$ are neighbors in the star tiling $\mathcal{K}$ (hence either $i=0$ or $i'=0$), and $v$ is assigned to  $V_{i,j}^{B}$. We find $h$ vertices in $V_{i,j}^{B}$ and $h-1$ vertices in $V_{i',j'}^{A}$ such that together with $v$, they form a copy of $K_{h,h}$. We must verify that we can find these vertex-disjoint copies of $K_{h,h}$.

To that end, there are at most $\bigl\lfloor (\epsilon_1/h)L\bigr\rfloor$ many vertices assigned to each cluster of $\mathcal{B}$. We remove a vertex from a cluster $V_{i,j}^{B}$ if it is either assigned to a vertex of $v \in V_{0}$ or if it is in a cluster $V_{i',j}^{A}$ as described above. Therefore at most $(1+t)\epsilon_1 L$ vertices are removed from any cluster. Recall that we had a similar assignment of vertices $V_{0}^{B}$ to members of $\mathcal{A}$. In sum, at most $2(1+t)\epsilon_1 L$ vertices were removed from each cluster.

As a consequence of so few vertices being removed, we have that by the Slicing Lemma~(Lemma \ref{cor: slicing}), the vertices that remain of each pair $\bigl(V_{i',j}^{A}, V_{i,j}^{B}\bigr)$ is $(2\epsilon_1,d/3)$-super-regular, provided $\epsilon_1 \ll d$ and $d$ is sufficiently small. 
Although we have possibly removed some vertices from each cluster, for ease of notation, we do not modify the names of the cluster. 
At this stage in the proof, we have that number of vertices in $V_{i,j}$ is at most $L$ and at least $(1-2(t+1)\epsilon_1)L$.



 Now, we will argue that we can remove a small number of vertices in order to make the size of what remains of each of the clusters divisible by $h$. 
 We accomplish this by locating pairs of clusters from the same side of the bipartition, say $(V_{i,j}^{A},V_{i',j'}^{A})$, whose sizes are not divisible by $h$, and locating a cluster $V_{x,j}^{B}$, thus $(V_{i,j}^{A}, V_{x,j}^{B})$ is $(2\epsilon_1,d/3)$-super-regular.  Then, we will find a copy of $K_{h,h}$ which uses $1$ vertex from $V_{i,j}^{A}$ and $h$ vertices of $V_{x,j}^{B}$ and $h-1$ vertices of $V_{i',j'}^{A}$.

To this end, first we arbitrarily order all of the clusters $V_{i,j}^{A}$ for which $h \nmid |V_{i,j}^{A}|$. We will use an additional subscript to denote this ordering (but will drop this subscript when the context is unambiguous). Note that these clusters must appear in pairs since $h \mid n$. Consider a pair in this ordering $(V_{i,j,q}^{A}, V_{i',j',q+1}^{A})$. At each stage we will have removed at most $h-1$ vertices from $V_{i,j,q}^{A}$ and at most $(h-1)(h-1)$ vertices from $V_{i',j',q+1}^{A}$.

We have two cases, either the divisibility issue occurs within the leaves of the same star or the divisibility issue occurs within clusters belonging to disjoint stars. In the latter case, we handle the issue using random edges. 

Fix a pair $(V_{i,j,q}^{A}, V_{i',j',q+1}^{A})$ as above.

In the first case, suppose $j = j'$ and $i\neq i'$. Recall, each of $(V_{i,j}^{A}, V_{0,j}^{B})$ and $(V_{i',j}^{A}, V_{0,j}^{B})$ are $(2\epsilon_1,d/3)$-super-regular. We choose a vertex $v \in V_{i,j}^{A}$ arbitrarily and apply Corollary~\ref{cor: (v,A,B)} to find a copy of $K_{h,h}$ covering $v$ and $h$ vertices of $V_{0,j}^{B}$, and $h-1$ vertices of $V_{i',j}^{A}$.


In the second case, suppose that $j \neq j'$. Since $(V_{i,j}^{A},V_{x,j}^{B})$ is $(2\epsilon_1, d/3)$-super-regular, we can choose $v \in V_{i,j}^{A}$ arbitrarily and note that $\deg(v,V_{x,j}^{B}) \geq \frac{d}{3}(1-\epsilon_1)|V_{x,j}^{B}|$. Now, by Lemma~\ref{lemma: partial covering}, find a copy of $K_{h-1,h}$ with $h$ vertices in $N_{V_{x,j}^{B}}(v)$ and $h-1$ vertices in $V_{i,j'}^{A}$.

In either case, the cluster $V_{x,j}^{B}$ loses at most $h(h-1)$ many vertices. Since there are at most $(h-1)\ell$ iterations, then each cluster loses at most $h(h-1) \cdot (h-1)\ell$ many vertices. We apply the same process to the clusters in $\mathcal{B}$ by exchanging the roles of $\mathcal{A}$ and $\mathcal{B}$. 

By the end of this process, each cluster has lost at most $2h^3 \ell=O(1)$ vertices. Now each $(V_{0,j}, V_{i,j})$ is $(3\epsilon_1, d/4)$-super-regular, provided $\epsilon_1\ll d$ and $d$ is sufficiently small. Moreover, every cluster is now divisible by $h$.


We will now ensure that all of the clusters that form centers of stars, that is, clusters labeled $V_{0,j}$, can be made into the same size, which is divisible by $ht$. To this end, set $L_1= ht\bigl\lfloor (1-3\epsilon_1h)L/ht \bigr\rfloor$. For each $V_{0,j}$, Select $V_{i,j}$ arbitrarily, $i \in [t]$, and apply Lemma~\ref{cor: bipartite copies} to find at most $3\epsilon_1L$ vertex-disjoint copies of $K_{h,h}$ each with $h$ vertices of $V_{0,j}$ and $h$ vertices of $V_{i,j}$. By the Slicing Lemma (Corollary~\ref{cor: slicing}), $(V_{0,j}, V_{i,j})$ is $(6\epsilon_1, d/5)$-super-regular for each $i \in [t]$ and for each $j \in [k]$.


We will now make all of the clusters that form leaves to be the same size $L_1$ by pairing up leaves of size greater than $L_1$ and making use of random edges.
Because $\mathcal{A}$ and $\mathcal{B}$ have the same number of clusters, then for every $V_{i,j}^{A}$ with size exceeding $L_1$, we can find $V_{i',j'}^{B}$ also with size exceeding $L_1$.
While $\bigl|V_{i,j}\bigr| -L_1 >0$ we use Lemma~$\ref{lemma: partial covering}$ to remove copies of $K_{h,h}$ greedily.
Note that throughout this process, the number of remaining vertices in $V_{i,j}$ will be divisible by $h$, since $\bigl|V_{i,j}\bigr|$ was made to be divisible by $h$ before the beginning of this process.
By the Slicing Lemma (Corollary~\ref{cor: slicing}), each $(V_{0,j}, V_{i,j})$ is now $(12\epsilon_1, d/6)$-super-regular. 
Note also that the clusters  are all of size $L_1$, which is divisible by $ht$.


Our goal is now to partition each of the centers $V_{0,j}$ into $t$ equally sized parts, then assign each part to one of the $t$ leaves.
We arbitrarily match the leaves between disjoint stars and find a large partial tiling between each pair using random edges.
This partial tiling will leave $L_1/t$ uncovered vertices in each leaf.
Finally, we will match these uncovered vertices from $V_{i,j}$ to a corresponding part from the center $V_{0,j}$.

To that end, for each $j \in [k]$, partition $V_{0,j}$ uniformly at random into $t$ equally sized pieces $V_{0,j}= \bigsqcup_{i=1}^{t} T_{i,j}$ such that $\bigl|T_{i,j}\bigr| = L_1/t$. Also, we randomly partition $V_{i,j}$ into two pieces, one of which we'll define to be $S_{i,j}'$ and has size $L_1/s$ and $V_{i,j}'$ with size $(s-1)L_1/s$ which satisfies $1/t = 1/s + \eta(s-1)/s$. Hence $1/s<1/t$. Recall that $\eta \ll \epsilon$.

Since there are the same number of leaves in $\mathcal{A}$ as in $\mathcal{B}$, we find a perfect matching among the leaves (in $\GSz$) and find a large partial tiling between each pair of leaves.
By our choice of $\eta$ we apply Lemma \ref{lemma: partial covering} to the pair $(V_{i_1, j_1}', V_{i_2, j_2}')$ to find a partial tiling of all but at most $\eta(s-1)L_1/s$  vertices.
We denote the uncovered vertices by $S_{i_1,j_1}''\subset V_{i_1,j_1}'$ and $S_{i_2,j_2}''\subset V_{i_2,j_2}'$. 
For each $j \in [t]$ and for each $i \in [k]$, let $S_{i,j} = S_{i,j}' \sqcup S_{i,j}''$. 
For $L_1$ (hence $L_1/(ht)$) sufficiently large, there exists $\eta \ll \epsilon$ such that $L_1/s$ is divisible by $h$.
We have
 \begin{align*}
     \bigl|S_{i,j}\bigr| = \bigl|S_{i,j}'\bigr| + \bigl|S_{i,j}''\bigr|=\frac{1}{s}\,L_1 + \eta\, \frac{s-1}{s}\,L_1 = \frac{L_1}{t}.
 \end{align*}

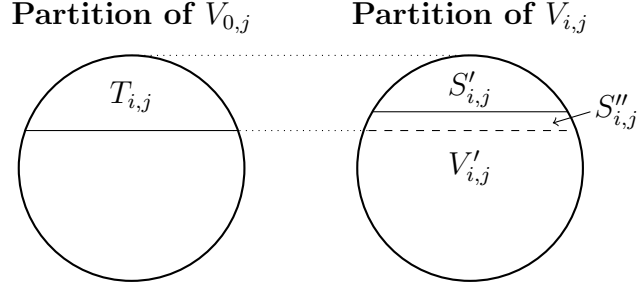
\begin{figure}
\centering
\begin{tikzpicture}[scale=.5]
\useasboundingbox (-12,-3) rectangle (3,5);
\draw[thick] (0,0) circle (3cm);
\draw[thick] (-9,0) circle (3cm);

\node at (0, 2.1) {$S_{i,j}'$};
\node at (0,0) {$V_{i,j}'$};
\node at (3.9,1.5) {$S_{i,j}''$};
\node at (-9,1.9) {$T_{i,j}$};

\draw[->]        (3.2,1.5)   -- (2.2,1.2);

\draw[black]  (-2.6,1.5) -- (2.6,1.5);
\draw[black,dashed]  (-2.7,1.0) -- (2.7,1.0);

\draw[black]  (-11.85,1.0) -- (-6.15, 1.0);

\draw[dotted, black]  (-9,3) -- (0,3);
\draw[dotted, black]  (-6.3,1.0) -- (-2.7,1.0);
\node at (0, 4) {\textbf{Partition of $V_{i,j}$}};
\node at  (-9,4) {\textbf{Partition of $V_{0,j}$}};
\end{tikzpicture}
\caption{The partition of a pair $(V_{0,j},V_{i,j})$. The pair $(S_{i,j}' \sqcup S_{i,j}'',T_{i,j} )$ is shown to be $(\epsilon_2, d/7)$-super-regular.}
\label{fig:partition}
\end{figure}

We claim that $\bigl(T_{i,j}, S_{i,j}\bigr)$ is $\bigl(\epsilon_2,d/7\bigr)$-super-regular. 
In order to do so, we must show that, for $T=T_{i,j}$ and $S=S_{i,j}$ (with $S'=S_{i,j}'$ and $S''=S_{i,j}''$), the following two conditions hold:
\begin{itemize}
    \item for all sets $X \subseteq S$, $Y\subseteq T$ such that $|X| \geq \epsilon_2 |S|$ and $|Y| \geq \epsilon_2 |T|$, we have $d_{G}(X,Y) >d/7$;
    \item for all $a \in S$ and $b \in T$, we have $\deg_{G}(a) > (d/7)|T|$ and $\deg_{G}(b) >(d/7)|S|$.
\end{itemize}
Let $X' = X \cap S'$ and $X''=  X \cap S''$ so that $X = X' \sqcup X''$ such that
\begin{align*}
    |X|     &\geq   \epsilon_2 |S| = \bigl(\epsilon_2/t\bigr) L_1 \\
    |Y|     &\geq   \epsilon_2 |T| = \bigl(\epsilon_2/t\bigr) L_1 .
\end{align*}
Then, since $\bigl|X''\bigr|\leq \bigl|S''\bigr|<\eta L_1$,
\begin{align*}
    \bigl|X'\bigr| = \bigl|X\bigr| - \bigl|X''\bigr| \geq \bigl|X\bigr| - \bigl|S''\bigr| \geq \bigl|X\bigr| - \eta \, L_1 \geq 12 \epsilon_1 L_1,
\end{align*}
because $0 < \eta \ll \epsilon_1, \ll \epsilon_2 \ll 1/t$. 
Also, $|Y| \geq \epsilon_2 |T| \geq 12 \epsilon_1 L_1$. Since $\bigl(V_{0,j}, V_{i,j}\bigr)$ is $\bigl(12\epsilon_1, d/6\bigr)$-super-regular, we obtain: 
\begin{align*}
    e\bigl(X,Y\bigr) \geq e\bigl(X',Y\bigr) &> \frac{d}{6}\bigl|X'\bigr|\bigl|Y\bigr| \\
    d(X,Y) &> \frac{d}{6} \frac{|X'|}{|X|} \geq \frac{d}{6}\biggl( 1 - \frac{|X''|}{|X|}\biggr) \geq \frac{d}{6}\Bigl(1 - \frac{\eta L_1}{\bigl(\epsilon_2/t\bigr) L_1} \Bigr)> \frac{d}{7}.
\end{align*}

This verifies the first bullet point of super-regularity. 
As to the second bullet point, we first consider $a\in S$.

For all $a\in S$, $\mathbb{E}\bigl(\deg(a,T)\bigr) \geq (d/6)\bigl(L_1/t\bigr)$. 
So using a tail bound of the hypergeometric distribution (see~\cite{JLR}, Theorem 2.10), we obtain 
 \begin{align*}
     \prob\biggl(\deg(a,T)< \frac{d}{7}
        \frac{L_1}{t} 
 \biggr) < 2\exp\biggl(-c\,\frac{d}{6}\frac{L_1}{t}\biggr).
 \end{align*}
for some positive real constant $c$. By the union bound, the probability that there exists any such vertex is at most 
\begin{align*}
    2 \exp\biggl(\log|S| -c\,\frac{d}{6}\frac{L_1}{t} \biggr) =  \exp\biggl(\log\Bigl(\frac{L_1}{t}\Bigr) -c\,\frac{d}{6}\frac{L_1}{t} \biggr) =o(1).
\end{align*}

For all $b\in T$, $\mathbb{E}\bigl(\deg(b,S)\bigr) \geq \mathbb{E}\bigl(\deg(b,S')\bigr) \geq(d/6)\bigl(L_1/s\bigr)$. Since $\deg(b,S')$ follows a hypergeometric distribution, and $1/s <1/t$ we obtain, that the probability that there exists a vertex for which $\deg(b,S) < (d/7) L_1/t$ is at most
\begin{align*}
    2 \exp\biggl(\log|T| -c\,\frac{d}{6}\frac{L_1}{s} \biggr)=o(1).
\end{align*}



We have that $(S,T)$ is $(\epsilon_2,d/7)$-super-regular and $|S| = |T|$, so we can apply the Blow-Up Lemma (Lemma~\ref{lemma: blow-up}) to complete the tiling.

\section{High-degree bipartite graphs}\label{section: alpha large}
In this section we will discuss the regime in which $\alpha=\bigl(\frac{1}{n}\bigl\lceil\frac{1}{2}n\bigr\rceil-\frac{1}{n}\bigr)$. 
We will provide an example to show that the probability threshold for a perfect $K_{h,h}$-tiling in $G_{n;\alpha}\cup G_{n,n,p}$ is at least $n^{-(h+1)/h}$ for any $h\geq 1$. 
Recall that Theorem~\ref{thm:threshold} requires the higher probability threshold of $p(n)=n^{-(2h-1)/h^2}$ for any constant $\alpha\in \bigl(0,1/(2h)\bigr)$. 



\subsection{An extremal example}
We give a balanced bipartite graph $G$ with $\delta(G) = \bigl(\frac{1}{n}\bigl\lceil\frac{1}{2}n\bigr\rceil-\frac{1}{n}\bigr)n$ such that if  $p(n) = o\bigl(n^{-(h+1)/h}\bigr)$, then \whp~$G \cup G_{n,n,p}$ does not have a perfect $K_{h,h}$-tiling. 

Let $G$ have vertex classes $A= A_{1} \sqcup A_2$ and $B= B_{1} \sqcup B_{2}$ with $\bigl|A_1\bigr| = \bigl|B_1\bigr|=\bigl\lfloor\frac{1}{2}n\rfloor+1$ and $\bigl|A_2\bigr| = \bigl|B_2\bigr|= \bigl\lceil\frac{1}{2}n\bigr\rceil-1$. 
The graph $G$ is defined to have all edges in the pairs $\bigl(A_1, B_2\bigr)$ and $\bigl(A_2, B_1\bigr)$ and no other edges. 
Observe that $\delta(G) = \bigl(\frac{1}{n}\bigl\lceil\frac{1}{2}n\bigr\rceil-\frac{1}{n}\bigr)n$. 

Since $|A_1|+|B_1|>|A_2|+|B_2|$, any perfect $K_{h,h}$-tiling requires there to be at least one copy of $K_{h,h}$ that uses at least $h+1$ vertices from $A_1\cup B_1$.
However, in order for this to occur, it would require this copy to either have a $K_{2,2}$ or a $K_{1,h}$ in $G_{n,n,p}\bigl[A_1\cup B_1\bigr]$.
We will show that if $p(n) = o\bigl(n^{-(h+1)/h}\bigr)$, then this fails to occur, \whp.

Let $X$ be the number of copies of $K_{2,2}$ in $G_{n,n,p}$ and $Y$ be the number of copies of $K_{1,h}$ in $G_{n,n,p}$, then 
$$\E[X] = \Theta \bigl(n^4p^4 \bigr) \qquad\mbox{ and }\qquad \E[Y] = \Theta \bigl(n^{h+1}  p^{h}\bigr) .$$
By Markov's inequality (Lemma~\ref{lemma: markov}),
$$\prob(X \geq 1) \leq \E[X] =o(1) \qquad\mbox{ and }\qquad \prob(Y \geq 1) \leq \E[Y] =o(1). $$
Therefore, \whp~no copy of $K_{2,2}$ or $K_{1,h}$ exists in $G_{n,n,p}[A_1 \sqcup B_1]$. 
Consequently, \whp~$G\cup G_{n,n,p}$ has no perfect $K_{h,h}$-tiling.

\section{Concluding remarks}\label{sec:conclusion}
In this paper we have determined the probability threshold for the existence of a perfect $K_{h,h}$-tiling in the randomly perturbed bipartite graph when $0
<\alpha < 1/(2h)$ for all $h\geq1$. Following a trend in randomly perturbed graph theory~\cite{HMT, bottcher2023triangles, AKRcliques}
we have also determined a lower bound for the threshold in the high degree regime $\alpha = \bigl(\frac{1}{n}\bigl\lceil\frac{1}{2}n\bigr\rceil-\frac{1}{n}\bigr)$. The main obstacle one encounters in attacking the upper bound using the same techniques as in the proof of Theorem~\ref{thm: main} is the apparent need a mechanism by which one can find a linear (in $n$) sized partial $K_{h,h}$-tiling. We leave this as an open question. 

Moreover, given that there is a multipartite analog to the Hajnal-Szemer\'{e}di theorem \cite{keevash2015multipartite}, it is also natural to consider a multipartite analog to Balogh-Treglown-Wagner~\cite{balogh2019tilings}. A natural question is to determine the threshold for the existence of a $K_{h}$-tiling in an perturbed random $h$-partite graph, for $h\geq 3$. We conjecture that this probability threshold coincides with Theorem 1.3 of~\cite{balogh2019tilings}. 

\bibliographystyle{abbrv}
\bibliography{bibfile}

\section{Appendix}\label{sec:appendix}
We will prove that if $\alpha>0$ and if $p\geq C n^{-1}$, then $G_{n;\alpha}\cup G_{n,n,p}$ contains a perfect matching whp. This is the special case of Theorem~\ref{thm: main} for $h=1$.

We verify that the strong Hall condition is met by demonstrating that whp there does not exists a set of vertices $S\subset A$ for which $|S| = |N(S)|+1$. Note that by our assumption on $G_{n;\alpha}$ we must only consider sets of size at least $\alpha n$. For a subset $S \subset A$, let $X_S$ be the indicator variable for the event that $|S|>|N(S)|$. Let $X := \sum_{S \subset A} X_S$. 
First note that $|S|\leq n-\alpha n$. This is because the minimum degree condition guarantees that if $S'\subseteq A$ has size greater than $n-\alpha n$ then $N(S') = B$ by the pigeonhole principle. 

By Markov's inequality,
\begin{align*}
        \Pr(X \geq 1) \leq \E[X] 
        &= \sum \E[X_i]\\
        &\leq \sum_{\ell=\alpha n+1}^{n-\alpha n} \binom{n}{\ell}\sum_{j=\alpha n}^{\ell-1}\binom{n}{j}(1-p)^{\ell(n-j)} \\
        &\leq (2^n)^2 \sum_{\ell=\alpha n +1}^{n-\alpha n}(1-p)^{\ell n} \sum_{j=\alpha n}^{\ell - 1} (1-p)^{-\ell j}\\
        &\leq 2^{2n} \sum_{\ell=\alpha n +1}^{n-\alpha n}(1-p)^{\ell n} \cdot n (1-p)^{-\ell^2}\\
        &\leq n^2 2^{2n} (1-p)^{n^2/4}\\
        &\leq \exp\bigl\{2\ln n + 2n\ln 2 -p n^2/4\bigr\}\\
    \end{align*}
    So in fact, $C> 8 \ln 2$ suffices. 
\end{document}